\newtheorem{theorem}{Theorem}
\newtheorem{definition}{Definition}
\newtheorem{lemma}{Lemma}
\newcommand{\ie}{{\em i.\thinspace{}e. }}
\newcommand{\etal}{{\em et al. }}
\title{Sobolev gradients and image interpolation}
\author{Parimah Kazemi (parimah.kazemi@gmail.com) and Ionut Danaila}
\begin{document}
\maketitle

\begin{abstract}
We present here a new image inpainting algorithm based on the Sobolev gradient method in conjunction with the Navier-Stokes model.  The original model of Bertalm{\'\i}o \etal [Proc. Conf. Comp. Vision Pattern Rec., 2001] is reformulated as a variational principle based on the minimization of a well chosen functional by a steepest descent method using Sobolev gradients. This new theoretical framework offers an alternative of the direct solving of a high-order partial differential equation, with the practical advantage of an easier and more flexible computer implementation. In particular, the proposed algorithm does not require any constant tuning, nor advanced knowledge of numerical methods for Navier-Stokes equations (slope limiters, dynamic relaxation for Poisson equation, anisotropic diffusion steps, etc). Using a straightforward finite difference implementation, we demonstrate, through various examples for image inpainting and image interpolation, that the novel algorithm is faster than the original implementation of the Navier-Stokes model, while providing results of similar quality.\\
The paper also provides the mathematical theory for the analysis of the algorithm. Using an evolution equation in an infinite dimensional setting we obtain global existence and uniqueness results as well as the existence of an $\omega$-limit. This formalism is of more general interest and could be applied to other image processing models based on variational formulations.
\end{abstract}

\section{Introduction}

Image inpainting refers to the process of filling in an occluded region in an image so that the seam between the original image and the inpainted region is undetectable by a typical viewer.  Some examples of applications are restoring old photographs, removing unwanted objects from images such as text, or removing certain features of an image such as eye glasses.  Image inpainting has been traditionally the work of professional artists.  However, in recent years, much work has been done in the area of digital image inpainting.  The idea of digital image inpainting is that the algorithm, using information from elsewhere in the digital image, fills in the unknown region so that a typical viewer cannot judge what parts of the image are from the original image and what parts are painted in.  In general, the area to be altered consists of many pixels, complex geometries, and possibly spans the entire image.  This establishes image inpainting as a challenging computational problem.

Currently there are a number of models and algorithms that are used in digital inpainting.  The pioneering works in digital image inpainting include \cite{bertalmio2000}, \cite{chan2000}, \cite{masnou1998}, and \cite{nitzberg1993}. In \cite{nitzberg1993}, the authors present a method where the region to be inpainted is viewed as an occluded area.  Then one connects T-junctions with the same grayscale values at the boundary using elastica minimizing curves.  In \cite{masnou1998}, the authors extend on this idea by using a variational formulation.  They inpaint the region with unknown pixel values by connecting isophotes at the boundary using geodesic curves. In \cite{chan2000}, the authors use a geometric model associated with Euler's elastica.  The idea is to view the image as a surface and to minimize a linear combination of the curvature and arc length. More recently, the work in \cite {bornemann}, \cite{criminsi}, and \cite{tschump} provide state of the art results in the area of digital image inpainting.  In \cite{chanbook} and \cite{survey} a comprehensive survey of modern methods is given.

We focus here on the model proposed by Bertalm{\'\i}o, Bertozi and Sapiro (hereinafter BBS), as presented in \cite{bertalmio2001a}. The model is based upon a transport equation analogous to  the transport of vorticity in an incompressible fluid.  The resulting model is a Navier-Stokes type time evolution.  The approximated steady state solution of this model gives the corrected image.  This model has several attractive features.  It is able to fill in regions surrounded by different backgrounds as well as regions that cross through boundaries, it is capable of handling arbitrary topologies, and as a third order PDE, it forces continuity conditions on both the image intensity as well as the gradient of the intensity across the boundary.  The main drawback of the method is that it requires a large number of iterations (several thousands) to converge, making it not very competitive against noniterative fast-marching methods (e. g. \cite{bornemann}).

In this paper, drawing motivation from the model presented in \cite{bertalmio2001a}, we propose to solve the third order PDE by using a different formalism, based on the minimization of the corresponding least-squares derived functional. The idea is to use for the minimization
a steepest descent method with Sobolev gradients, a method that we found  effective in reducing the computational time for different applications (e. g. \cite{ourSISC}). The novelty and the challenge of the present approach is that we deal with a third order PDE, which is, in general,  mathematically and computationally difficult to tackle. We establish in this contribution the mathematical framework for the analysis of the method and show that the numerical performance is considerably improved compared to the original implementation of the model described in  \cite{bertalmio2001a}. We thus offer a new possibility for using the Navier-Stokes model as an alternative to fast-marching methods, that remain unbeatable with regard to computing performance, but are still technical since they  need a careful tuning of constants or functional parameters (see also \cite{survey}).

In the method of calculus of variations, one often solves minimization problems by computing the Euler-Lagrange equations which are used as a direction of descent for the energy.  Explicit iterative schemes using the Euler-Lagrange equations require many iterations to converge due to a tight restriction on the step size resulting from the CFL stability condition. In our minimization scheme, we compute a gradient with respect to a Sobolev metric.  In practice, this amounts to preconditioning the Euler-Lagrange equations for use in minimization.  The preconditioning in general allows for a larger time step when  the resulting evolution equation is discretized in time.  The Sobolev gradient method has proved to be very effective in a variety of image processing applications, for example  \cite{calder2010}, \cite{calder2011}, \cite{renka2009}, and \cite{richardson1}.  The focus of this work is to study the effect of preconditioning that is often associated with the Sobolev gradient, both in terms of the quality of the image and the computational efficiency.  In contrast with the BBS model, we do not use a diffusion term, but instead apply a smoothing operator to the Navier-Stokes gradient flow. An advantage is that our evolution equation is well-posed for all time and we obtain that the minimizer is $C^1$ across the boundary and can be obtained as an $\omega$-limit of the evolution equation.  The minimization of the energy functional with the Sobolev gradient falls into the more general category of image regularization \cite{reg1}, \cite{reg2}, \cite{reg3}, \cite{reg4}, and \cite{reg5} since we expect the interpolated image to be smooth due to continuity properties of the gradient.

Although our results and applications are restricted to solving for the two-dimensi\-onal steady state arrived at by the Navier-Stokes model, our scheme is quite general.  The ideas we present here can easily be adapted to computationally solve a vast variety of higher order partial differential equations.  Since we treat the problem as a variational problem, our scheme can also be applied to existing variational models in image processing (which almost always use the Euler-Lagrange equations for minimization).  An interesting, although not immediate, further application would be the use of the Sobolev gradient method for variational problems such as the ones presented in \cite{arias}.  Finally, our scheme is cast in both an infinite dimensional Hilbert space setting and analogously in a finite dimensional finite--difference setting, making a strong connection between the theory of digital image analysis and applications.

The outline of the paper is as follows.  In section \ref{secnv}, we give a brief explanation of the BBS model as presented in \cite{bertalmio2001a} and make a connection to the Navier-Stokes model for incompressible fluids.  In section \ref{secvar}, we describe the variational formulation from which we obtain our gradient flow in a Hilbert space setting.  We show that the flow has a unique global solution and an $\omega$-limit at which minimization is achieved.  In section \ref{secSG} we obtain an expression for the Sobolev gradient. In section \ref{secdisc}, we give the mathematical formulation in the discrete finite difference setting, and in section \ref{secresults} we present the results from several examples which includes the resulting images as well as a comparison of CPU time, time step size, and the effectiveness of our scheme in solving the boundary value PDE.

\section{The Navier-Stokes model for image inpainting}\label{secnv}

Here we give a summary of the Navier-Stokes model for image inpainting as presented in \cite{bertalmio2001a}, also in \cite{bertalmio2000}, and \cite{bertalmio2001}.  A digital grayscale image can be thought of as a set of $m$ by $n$ data points taking values between 0 and 255.  The resulting function $I$ defined on the $m$ by $n$ grid is called the image intensity.  An isophote of $I$ is a level-line in this context.  The smoothness of the image is represented by the Laplacian of the intensity.  The direction of the level-lines of the intensity $I$ at the $(i,j)^{th}$ grid point is given by the direction that is perpendicular to the gradient of $I$ at the point $(i,j)$.

The idea of the Navier-Stokes model for image inpainting is to propagate the gradient of the smoothness of the images in the direction of the isophotes.  Thus one wants to iterate the evolution equation
\begin{equation}
I'(t)= \nabla^{\perp} I(t) \cdot \nabla \Delta I(t)
\end{equation}
or perhaps, by adding and anisotropic diffusion term,
\begin{equation} \label{anis}
I'(t)= \nabla^{\perp} I(t) \cdot \nabla \Delta I(t) + \nu \nabla \cdot(g|\nabla I(t)| \nabla I(t))
\end{equation}
to steady state solution
\begin{equation}\label{stdy}
\nabla^{\perp} I \cdot \nabla \Delta I=0.
\end{equation}
In equation \eqref{anis}, a second term is added for its smoothing effects, where $\nu >0$ is small and $g$ is a diffusivity function.

The Navier-Stokes model for incompressible Newtonian fluids gives that the velocity field $v$ and pressure $p$ are coupled according to the equation
\begin{equation}
v'(t) + v(t) \cdot \nabla v(t) = - \nabla p(t) + \nu \Delta v(t) \text{ and } \nabla \cdot v(t)=0.
\end{equation}
In two dimensions, the divergence free velocity field possesses a stream function $\psi$ so that $\nabla^{\perp} \psi = v$.  The vorticity can be expressed as $w=\Delta \psi$ and satisfies the advection diffusion equation
\begin{equation}
w'(t)=-v(t) \cdot \nabla w(t)
\end{equation}
when the viscosity $\nu$ is zero.  The steady state must satisfy
\begin{equation}
\nabla^{\perp} \psi \cdot \nabla \Delta \psi=0.
\end{equation}
This is the incompressible Euler equation for fluid flow.  We refer the reader to \cite{temam} for a background on the theory of the Navier-Stokes equation.

Another way to say this is that the gradient of the stream function $\psi$ and the gradient of the Laplacian of the stream function must be parallel.  Thus the analogy between the Navier-Stokes model and image inpainting is that the image intensity acts as the stream function in the Navier-Stokes model.  In our approach, we consider an alternate to the method of Bertalmio as presented in \cite{bertalmio2001a} and minimize the norm of the left hand side  using an evolution equation based on a Sobolev gradient method.

\section{A variational formulation in the continuous setting}\label{secvar}

In this section, we describe our scheme in a Hilbert space setting.  The main idea is that we form an energy functional by taking the norm of the left hand side of equation \eqref{stdy}.  We then search for a minimum of this energy using a gradient flow with a Sobolev gradient.  We start by stating the definitions and results that we will need from the theory of Sobolev spaces.  We then define the variational problem and study properties of the resulting functional.  We argue that every critical point must be a solution of equation \eqref{stdy}.  We define the Sobolev gradient in a general setting and, incorporating the boundary value into the gradient, we obtain a gradient flow that has a unique global solution.  We obtain a critical point of the energy as an $\omega$-limit of the gradient flow.  Since a critical point of the energy corresponds to a zero of the energy, we obtain that our energy functional does indeed posses a minimum.  We discuss regularity properties of the minimizer.

\subsection{Sobolev spaces}\label{secsob}

We review here the basic information regarding the theory of Sobolev spaces.  The information provided here, as well as the notation, are taken from \cite{adams2003}.  Let $\Omega$ be a bounded open subset of $\mathbb{R}^2$.  Recall that
\begin{equation*}
W^{m,p}(\Omega) \equiv \{u \in L^p(\Omega) \ : \ D^{\alpha} u \in L^p(\Omega) \text{ for } 0 \leq |\alpha| \leq m \}
\end{equation*}
where for a multi-index $\alpha$,  $D^{\alpha} u$ denotes the $\alpha^{th}$ partial derivative of $u$ taken in the distributional sense.  Recall also that
\begin{equation*}
H^{m,p}(\Omega) \equiv \text{ the completion of } \{ u \in C^m(\Omega)\ : \ \|u\|_{m,p} < \infty \}
\end{equation*}
where $C^m(\Omega)$ is the space of $m$ times continuously differentiable functions and
\begin{equation*}
\|u\|_{m,p} = \left( \sum_{0 \leq |\alpha| \leq m} \|D^{\alpha} u\|_p^p) \right)^{1/p}
\end{equation*}
with $\| \cdot \|p$ denoting the $L^p$ norm.  $H^{m,p}_0(\Omega)$ is the closure of the the infinitely differentiable functions with compact support in $\Omega$ in $W^{m,p}(\Omega)$.  It is a result (Thm 5.37 of \cite{adams2003}), that under sufficient regularity conditions on the boundary of $\Omega$, the trace of $D^{\alpha} u$ is zero for all $u \in H^{m,p}_0(\Omega)$ and $|\alpha| \leq m$.

It is a result that for every open domain $H^{m,p}(\Omega) = W^{m,p}(\Omega)$.  Since we wish to work in a Hilbert space setting, we fix $p=2$ and refer to the space $H^{m,2}(\Omega)$ as $H^m$, $H^m_0=H^{m,2}_0(\Omega)$, and $L^2(\Omega)=L$.

Let $Du=\{ D^{\alpha} u : 1 \leq |\alpha| \leq k\}$.  We can see that $\left\{ \binom{u}{Du} : u \in H^{m} \right\}$ is a closed subspace of $L(\Omega^m)$ where $\Omega^m=\cup_{|\alpha| \leq m} \Omega_{\alpha}$.  Thus there exists a unique orthogonal projection $P=P(m)$ from $L(\Omega^m)$ onto $\left\{ \binom{u}{Du} : u \in H^{m} \right\}$.  We mention this here as we will need this projection when constructing the Sobolev gradient in section \ref{Sobgrad}.  We also note that $H^m_0$ is a closed subspace of $H^m$.  Thus there exists a unique orthogonal projection of $H^m$ onto $H^m_0$ which we denote by $P_0=P_0(m)$.

Suppose that $\Omega$ is bounded and has $C^m$ boundary $\partial \Omega$.  It is a result, see \cite{adams2003}, that for $u \in H^m$, the boundary traces of $u$ can be defined as $f_{\alpha}(u)=D_{\alpha} u|_{\partial \Omega}$ for $0 \leq |\alpha| \leq m$.  Further $f_{\alpha}(u)$ is in $L^2(\partial \Omega)$.   If $u \in H^m_0$, then $f_{\alpha}(u)=0$ for all $\alpha$.  Finally, we remind the reader the part of the Rellich-Kondrachov theorem that we will need.
\begin{theorem}
Suppose that $\Omega \subset \mathbb{R}^2$ is a bounded domain with a smooth boundary. The embeddings
\begin{equation*}
H^{j+m}(\Omega) \rightarrow C^j(\bar{\Omega})
\end{equation*}
and
\begin{equation*}
H^{j+m} \rightarrow H^j
\end{equation*}
are compact when $m \geq 2$.
\end{theorem}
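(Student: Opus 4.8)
The plan is to reduce both compactness statements to a single pair of base cases by differentiation followed by a diagonal argument, invoking the smooth-boundary hypothesis only through an extension operator. Since $\partial\Omega$ is smooth, there is a bounded linear extension operator $E : H^k(\Omega) \to H^k(\mathbb{R}^2)$ which, after multiplication by a fixed cutoff, sends any bounded sequence to one supported in a common compact set and bounded in $H^k(\mathbb{R}^2)$. For a bounded sequence $(u_i) \subset H^{j+m}(\Omega)$, each derivative $D^\alpha u_i$ with $|\alpha| \leq j$ is bounded in $H^m(\Omega)$. It therefore suffices to prove the two base cases, that $H^m(\Omega) \hookrightarrow L^2(\Omega)$ and $H^m(\Omega) \hookrightarrow C^0(\bar\Omega)$ are compact for $m \geq 2$, and then to extract a single subsequence along which every $D^\alpha u_i$ converges (in $L^2$, respectively in $C^0$) by diagonalizing over the finitely many multi-indices $|\alpha| \leq j$. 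Convergence of all these derivatives is precisely convergence in $H^j$, respectively in $C^j$, since each of those norms is the sum of the corresponding derivative norms.

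For the target $H^j$ it is enough to treat $H^1(\Omega) \hookrightarrow L^2(\Omega)$, as $H^m \subset H^1$ for $m \geq 2$ (indeed $m \geq 1$ already suffices here, which is why the stated hypothesis $m \geq 2$ is only needed for the other embedding). I would prove this Rellich-type statement via the Kolmogorov--Riesz criterion: a bounded family in $L^2(\mathbb{R}^2)$ with fixed compact support is precompact provided the translates satisfy $\sup_i \| u_i(\cdot + h) - u_i \|_{L^2} \to 0$ as $h \to 0$. After extension, the elementary estimate $\| u(\cdot + h) - u \|_{L^2} \leq |h| \, \| \nabla u \|_{L^2}$, first verified for smooth $u$ and then passed to the limit, supplies exactly this equicontinuity of translations, uniformly over the bounded sequence. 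Hence $(u_i)$ is precompact in $L^2$.

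For the target $C^j$, the hypothesis $m \geq 2$ enters through the Sobolev exponent in dimension two. I would first establish the continuous embedding $H^2(\Omega) \hookrightarrow C^{0,\gamma}(\bar\Omega)$ into a H\"older space for some $\gamma \in (0,1)$: by the Gagliardo--Nirenberg--Sobolev inequality $H^2$ embeds into $W^{1,q}$ for every finite $q$, and Morrey's inequality then gives $W^{1,q} \hookrightarrow C^{0,1-2/q}$ once $q > 2$. The role of $m \geq 2$ is precisely that $(j+m) - j = m > n/2 = 1$, the strict inequality needed for a Sobolev embedding into continuous functions. Finally, the inclusion $C^{0,\gamma}(\bar\Omega) \hookrightarrow C^0(\bar\Omega)$ is compact by the Arzel\`a--Ascoli theorem, since a bounded H\"older seminorm yields uniform equicontinuity while the embedding above yields a uniform sup-norm bound. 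Composing these two facts shows $H^2 \hookrightarrow C^0$ is compact, which is the required base case.

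The main obstacle is the construction and boundedness of the extension operator $E$, which is where the smooth-boundary hypothesis is indispensable and which underlies both base cases: without it neither the translation estimate nor the Morrey argument can be run on all of $\mathbb{R}^2$. The remaining ingredients, namely the Kolmogorov--Riesz criterion, the Morrey and Gagliardo--Nirenberg--Sobolev inequalities, and Arzel\`a--Ascoli, are standard, and the reduction by differentiation and diagonalization is routine bookkeeping over the finitely many multi-indices with $|\alpha| \leq j$.
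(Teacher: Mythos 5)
The paper contains no proof of this statement for you to be compared against: it is quoted as the needed part of the Rellich--Kondrachov theorem, attributed to the textbook of Adams and Fournier \cite{adams2003}, and then used as a black box elsewhere (e.g.\ for the bound $|\nabla u| \leq c\,\|u\|_{H^3}$ and for extracting the $\omega$-limit of the gradient flow). Measured against the classical argument that the citation points to, your proof is correct and follows essentially that standard route: extension and cutoff to reduce to functions on $\mathbb{R}^2$ with fixed compact support; the translation estimate $\|u(\cdot+h)-u\|_{L^2} \leq |h|\,\|\nabla u\|_{L^2}$ together with the Kolmogorov--Riesz criterion for the Rellich base case (which, as you correctly observe, needs only $m \geq 1$, so the hypothesis $m \geq 2$ is not sharp for the $H^j$ target); and the chain $H^2 \hookrightarrow W^{1,q} \hookrightarrow C^{0,1-2/q}(\bar\Omega) \hookrightarrow C^0(\bar\Omega)$ for the $C^j$ target, with compactness supplied by Arzel\`a--Ascoli in the last link, where $m \geq 2$ is exactly the condition $m > n/2$ in dimension $n=2$. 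The reduction by differentiating and extracting finitely many successive subsequences over the multi-indices $|\alpha| \leq j$ is also sound. The one step you should not dismiss as pure bookkeeping is the identification of the limit: after arranging that each $D^\alpha u_i$ converges in $L^2$ (resp.\ uniformly) to some $v_\alpha$, you must check that $v_\alpha = D^\alpha v_0$, so that the convergence really is convergence in the $H^j$ (resp.\ $C^j$) norm to an element of that space; in the $L^2$ case this follows by passing to the limit in the distributional identity $\int_\Omega (D^\alpha u_i)\,\varphi = (-1)^{|\alpha|}\int_\Omega u_i\, D^\alpha\varphi$ for $\varphi \in C^\infty_0(\Omega)$, and in the uniform case from the classical theorem on uniform convergence of derivatives. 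With that one sentence added, your argument is a complete, self-contained proof of the result the paper only cites.
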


Here $C^j(\bar{\Omega})$ denotes the $j$ times continuously differentiable functions on $\bar{\Omega}$.

\subsection{Defining the minimization problem}

Assume that $\Omega$ is a bounded open set in $\mathbb{R}^2$ and has smooth boundary.  For $u \in H^3$, define $F$ so that
\begin{equation}\label{f}
F(Du)= \nabla^{\bot} u \cdot \nabla \Delta u.
\end{equation}
Now if $u \in H^3$, then $D^{\alpha} u \in H^2$ when $|\alpha|=1$. Since $H^2$ is compactly embedded in $C(\bar{\Omega})$, for $u \in H^3$, $\nabla u$ is a bounded continuous function, and $|\nabla u | \leq c \|u\|_{H^3}$ for some constant $c$ independent of $u$. $| \cdot |$ denotes the sup norm here and in the rest of the paper, $\| \cdot \|$ denotes the $L^2$ norm, and $\| \cdot\|_{H^m}$ the $H^m$ norm.  Thus
\begin{equation} \label{Fl2}
\|F(Du)\| = \left( \int_{\Omega} |\nabla^{\bot} u \cdot \nabla \Delta u|^2 \right)^{1/2} \leq |\nabla u | \ \| \nabla \Delta u\| \leq c \ \|u\|_{H^3}^2
\end{equation}
and we see that for $u \in H^3$, $F(Du) \in L^2$ and thus the energy
\begin{equation} \label{energy}
E(u)= \frac{\| F(D(u_0)) \|^2}{2}
\end{equation}
is well defined.  To reduce notation, we write $u_0 = P_0 u$, where $P_0$ is the orthogonal projection of $H^3$ onto $H^3_0$ .

In the model of BBS, $u \in H^3$ is sought so that $F(Du_0)=0$ and $u=g$ on the boundary of $\Omega$ for some predefined function $g$.  We assume that $g$ is $C^1$ on the boundary of $\Omega$.

First we show that $E$ is $C^2$ differentiable then explain why a minimizer of $E$ would correspond to a zero of $F \circ D$.

\begin{theorem}\label{Ec2}
$E:H^3 \rightarrow \mathbb{R}$ as defined in equation \eqref{energy} is $C^2$ Fr{\'e}chet differentiable.
\end{theorem}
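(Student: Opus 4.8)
The plan is to exhibit $E$ as a composition of a bounded linear map, a bounded \emph{bilinear} map, and the squared $L^2$-norm, and then invoke the fact that such compositions are automatically $C^\infty$, hence in particular $C^2$. First I would isolate the algebraic structure of $F\circ D$. In two dimensions $\nabla^{\bot}u\cdot\nabla\Delta v = -u_y(\Delta v)_x + u_x(\Delta v)_y$ is linear in the first derivatives of $u$ and linear in the third derivatives of $v$, so defining
\begin{equation*}
B(u,v) = \nabla^{\bot}u\cdot\nabla\Delta v
\end{equation*}
gives a bilinear map with $F(Du)=B(u,u)$. Writing $u_0=P_0u$ and recalling that $P_0:H^3\to H^3$ is a bounded linear (orthogonal) projection, we have $E(u)=\tfrac12\|B(P_0u,P_0u)\|^2$. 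Since $P_0$ is linear and bounded it is $C^\infty$, so by the chain rule it suffices to prove that $w\mapsto\tfrac12\|B(w,w)\|^2$ is $C^2$ on $H^3$.

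The main analytic step --- and the one I expect to be the real obstacle --- is to show that $B:H^3\times H^3\to L^2$ is a \emph{bounded} bilinear map. This is exactly the content of the estimate already recorded in \eqref{Fl2}: for $u,v\in H^3$ the factor $\nabla^{\bot}u$ is continuous and bounded because each first derivative of $u$ lies in $H^2$, which by the Rellich--Kondrachov embedding $H^2\hookrightarrow C(\bar{\Omega})$ satisfies $|\nabla u|\le c\,\|u\|_{H^3}$, while $\nabla\Delta v$ consists of third derivatives and so $\|\nabla\Delta v\|\le\|v\|_{H^3}$. Multiplying an $L^\infty$ factor by an $L^2$ factor keeps the product in $L^2$, giving
\begin{equation*}
\|B(u,v)\| \le |\nabla u|\,\|\nabla\Delta v\| \le c\,\|u\|_{H^3}\|v\|_{H^3}.
\end{equation*}
Thus $B$ is bounded bilinear; this is the only place where the regularity of $\Omega$ and the embedding theorem are genuinely used.

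Once boundedness of $B$ is in hand, the remainder is formal. A bounded bilinear map makes the quadratic map $Q(w)=B(w,w)$ smooth, with $DQ(w)h=B(h,w)+B(w,h)$, $D^2Q(w)(h,k)=B(h,k)+B(k,h)$ (independent of $w$), and all higher derivatives vanishing. The squared-norm map $N(f)=\tfrac12\|f\|^2=\tfrac12\langle f,f\rangle$ is smooth with $DN(f)g=\langle f,g\rangle$ and $D^2N(f)(g,h)=\langle g,h\rangle$. Composing, $\tfrac12\|B(w,w)\|^2 = N(Q(w))$ is $C^\infty$ by the chain rule, and hence so is $E=N\circ Q\circ P_0$. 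Tracking the chain rule through $P_0$ (writing $h_0=P_0h$, $k_0=P_0k$) yields the explicit formulas
\begin{equation*}
E'(u)h = \langle B(u_0,u_0),\,B(h_0,u_0)+B(u_0,h_0)\rangle
\end{equation*}
and
\begin{align*}
E''(u)(h,k) ={}& \langle B(k_0,u_0)+B(u_0,k_0),\,B(h_0,u_0)+B(u_0,h_0)\rangle \\
&+ \langle B(u_0,u_0),\,B(h_0,k_0)+B(k_0,h_0)\rangle,
\end{align*}
both of which are manifestly continuous in $u$ since $B$ is bounded bilinear and the inner product is continuous. This confirms that $E$ is $C^2$ (indeed $C^\infty$) Fr\'echet differentiable, as claimed.
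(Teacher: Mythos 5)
Your proposal is correct, and it reaches the result by a somewhat different, more structural route than the paper's own proof. The paper argues by direct formal differentiation: it computes $F'(Du)Dh=\nabla^{\perp}h\cdot\nabla\Delta u+\nabla^{\perp}u\cdot\nabla\Delta h$, remarks that $F''(Du)(Dh,Dk)$ can likewise be computed, writes down $E'(u)h$ and $E''(u)(h,k)$ --- formulas identical to yours, since $F'(Du_0)Dh_0=B(h_0,u_0)+B(u_0,h_0)$ and $F''(Du_0)(Dh_0,Dk_0)=B(h_0,k_0)+B(k_0,h_0)$ --- and then asserts that the $C^2$ property ``follows from the differentiability of $F$,'' leaving the remainder estimates and the continuity of the derivatives implicit. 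You instead factor $E=N\circ Q\circ P_0$ and reduce everything to the boundedness of the bilinear map $B:H^3\times H^3\to L^2$, which is exactly the paper's estimate \eqref{Fl2} and is the analytic core of both arguments; from there, the standard smoothness of bounded bilinear maps and of the squared norm, together with the chain rule, yields the derivative formulas and their continuity simultaneously. Your route buys rigor precisely where the paper is terse, and it gives the stronger conclusion that $E$ is $C^\infty$, not merely $C^2$. The one thing it omits (not needed for the statement itself, but used later in the paper) is the by-product identity $F'(Du)Du=2F(Du)$, hence $E'(u)u_0=4E(u)$ (equation \eqref{Eprimepos}), which the paper derives inside this same proof and relies on for the boundedness of the gradient flow; in your notation it follows at once from $B(u_0,u_0)+B(u_0,u_0)=2Q(u_0)$ applied to your formula for $E'$ with $h_0=u_0$.
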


We compute here the first and second Fr{\'e}chet derivatives of $E$.  We regard the first Fr{\'e}chet derivative of $E$ at $u\in H^3$ as a bounded linear operator on $H^3$ and the second Fr{\'e}chet derivative of $E$ at $u\in H^3$ as a bounded and symmetric bilinear operator on $H^3$.

\begin{proof}
We compute a Fr\'{e}chet derivative of $F$ as given in equation \eqref{f} to obtian
\begin{equation}\label{fprime}
F'(Du)Dh = \nabla^{\perp}h \cdot \nabla \Delta u + \nabla^{\perp}u \cdot \nabla \Delta h.
\end{equation}

One can also compute $F''(Du)(Dh,Dk)$ in order to see that $F \circ D$ is $C^2$ Fr{\'e}chet differentiable on $H^3$.  Next we can compute the first and second Fr{\'e}chet derivatives of $E$ to see that
\begin{equation} \label{Eprime}
E'(u)h=\langle F'(Du_0)Dh_0, F(Du_0) \rangle
\end{equation}
and
\begin{equation*}
E''(u)(h,k) = \langle F'(Du_0)Dh_0, F'(Du_0)Dk_0 \rangle + \langle F''(Du_0)(Dh_0,Dk_0), F(Du_0) \rangle.
\end{equation*}
$E$ being $C^2$ Fr{\'e}chet differentiable follows from the differentiability of $F$.

Notice from equation \eqref{fprime} that $F'(Du)Du = 2F(Du)$.  From equation \eqref{Eprime}, we see that
\begin{equation}\label{Eprimepos}
E'(u)u_0=\langle F'(D u_0)D u_0 , F(Du_0) \rangle = 2 \|F(Du_0)\|^2 = 4E(u).
\end{equation}
\end{proof}

Notice also from equation \eqref{Eprimepos} that $E'(u)u_0 = 2\|F(Du_0)\|^2$.  Hence, in order to solve the boundary value problem, it suffices to find $u \in H^3$ so that $u=g$ on $\partial \Omega$ and $E'(u)h = 0$ for all $h \in C^{\infty}_0(\Omega)$.

\subsection{The Sobolev gradient}

The idea of Sobolev gradients \cite{neuberger2010}, is that given a $C^1$ function $\phi$ that is everywhere defined on a Hilbert space $H$, we can represent the Fr{\'e}chet derivative of $E$ at $u$ using a member of the Hilbert space.  This is due to the Riesz representation theorem as if $\phi$ is $C^1$, then $\phi'(u)$ is a bounded linear functional on $H$.  Thus there exists a unique element of $H$, which we denote by $\nabla E(u)$, so that
\begin{equation}\label{grad}
\phi'(u)h = \langle h , \nabla \phi(u) \rangle_H \text{ for all } h \in H.
\end{equation}
One then considers the gradient flow
\begin{equation}\label{flowgen}
z(0)= x_0 \in H \text{ and } z'(t)=-\nabla \phi(z(t)).
\end{equation}
It follows that $z$ defines a path along which the energy is decreasing.  Assuming one has existence of such a $z$, then it is important to study the asymptotic limit as it is in this limit that we hope to achieve a critical point.

For this problem we consider a gradient with respect to the $H^3$ inner product.  In order to obtain an $\omega$-limit however, we need to map this gradient into a higher order space.  In this section, we give  two results from \cite{neuberger2010} that we will modify for this work and define our evolution equation.

\begin{theorem} \label{SD}
Suppose $\phi$ is a nonnegative valued $C^1$ function on a Hilbert space $H$.  Define the gradient of $\phi$ at $u$ as in equation \eqref{grad}.  If $\nabla \phi$ is a locally Lipschitz function from $H$ to $H$, then for $x_0 \in H$ the gradient flow \eqref{flowgen} has a unique solution for all $t \geq 0$.
\end{theorem}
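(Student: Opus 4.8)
The plan is to first invoke standard ordinary-differential-equation theory in a Banach space to produce a local solution, and then to use the nonnegativity of $\phi$ together with the energy dissipation identity to rule out finite-time blow-up via a Cauchy-sequence argument. First I would appeal to the Picard--Lindel\"of theorem in the Hilbert space $H$: since $\nabla\phi$ is locally Lipschitz from $H$ to $H$, the initial value problem \eqref{flowgen} has a unique solution $z$ on a maximal interval of existence $[0,b)$, and this solution is $C^1$ there. Everything then reduces to showing that $b=\infty$.

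The central computation is the energy identity. Differentiating $t\mapsto\phi(z(t))$ and using \eqref{flowgen} together with the defining property \eqref{grad} of the gradient gives
\begin{equation*}
\frac{d}{dt}\phi(z(t)) = \phi'(z(t))z'(t) = \langle z'(t), \nabla\phi(z(t))\rangle_H = -\|\nabla\phi(z(t))\|_H^2 \le 0 .
\end{equation*}
Hence $t\mapsto\phi(z(t))$ is nonincreasing, and since $\phi$ is nonnegative it is bounded below by $0$; therefore $\phi(z(t))$ converges to a limit as $t\to b^-$. Integrating the identity shows that $\int_0^t\|\nabla\phi(z(s))\|_H^2\,ds = \phi(x_0)-\phi(z(t))$, which stays bounded by $\phi(x_0)$ for every $t<b$.

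Next I would argue, assuming for contradiction that $b<\infty$, that $z(t)$ has a limit as $t\to b^-$. For $0\le s<t<b$, the fundamental theorem of calculus and the Cauchy--Schwarz inequality give
\begin{equation*}
\|z(t)-z(s)\|_H \le \int_s^t \|\nabla\phi(z(r))\|_H\,dr \le \sqrt{t-s}\left(\int_s^t \|\nabla\phi(z(r))\|_H^2\,dr\right)^{1/2} = \sqrt{t-s}\,\bigl(\phi(z(s))-\phi(z(t))\bigr)^{1/2}.
\end{equation*}
Because $\phi(z(\cdot))$ converges, the right-hand side tends to $0$ as $s,t\to b^-$, so $\{z(t)\}$ is Cauchy in the complete space $H$ and hence converges to some $z^\ast\in H$.

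Finally, to reach the contradiction I would continue the solution. Setting $z(b)=z^\ast$ yields a solution on $[0,b]$, since continuity of $\nabla\phi$ forces $z'(b^-)=-\nabla\phi(z^\ast)$, and applying the local existence result at $z^\ast$ produces a solution on $[b,b+\delta)$ for some $\delta>0$; gluing these gives a $C^1$ solution of \eqref{flowgen} on $[0,b+\delta)$, contradicting the maximality of $b$. Therefore $b=\infty$ and the flow exists for all $t\ge 0$, with uniqueness inherited from the local uniqueness in Picard--Lindel\"of. The main obstacle is precisely ruling out finite-time blow-up: the key insight is that nonnegativity of $\phi$ caps the total dissipation $\int_0^b\|\nabla\phi(z)\|_H^2$, and Cauchy--Schwarz then converts this $L^2$-in-time control on $z'$ into the Cauchy property of $z$ itself, which is exactly what is needed to extend the solution past $b$.
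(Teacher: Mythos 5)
Your proof is correct and follows essentially the same route as the paper's own argument (presented in the proof of Theorem \ref{global}, which adapts the proof of this theorem from \cite{neuberger2010}): local existence and uniqueness from the locally Lipschitz gradient, then the energy-dissipation identity combined with Cauchy--Schwarz to show the trajectory has a limit at the maximal existence time, and finally continuation past that time. If anything, your handling of the limit is slightly more careful than the paper's, which asserts only a uniform bound $\|z(a)-z(b)\|_{H}\le m$ rather than the Cauchy property as $a,b\to T^-$ that its own computation (and yours) actually delivers.
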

\begin{theorem} \label{zbounded}
Suppose, under the conditions of theorem \ref{SD}, that $z$ is a global solution of \eqref{flowgen} with energy $\phi$.  Further suppose that $\phi'(u)u \geq 0$ for all $u \in H$, then the range of $z$ is a bounded subset of $H$.
\end{theorem}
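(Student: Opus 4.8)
The plan is to exhibit a Lyapunov functional for the flow and exploit the monotonicity it provides. The natural candidate is the squared norm $t \mapsto \|z(t)\|^2$, whose non-increase along the trajectory will confine the entire range to a fixed ball centered at the origin.

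First I would record that, since $\nabla \phi$ is locally Lipschitz and hence continuous, the global solution $z$ furnished by Theorem \ref{SD} is continuously differentiable on $[0,\infty)$. Therefore $t \mapsto \|z(t)\|^2 = \langle z(t), z(t) \rangle_H$ is differentiable, and the product (chain) rule for the inner product gives
\begin{equation*}
\frac{d}{dt}\|z(t)\|^2 = 2\langle z(t), z'(t) \rangle_H .
\end{equation*}
Next I would substitute the flow equation $z'(t) = -\nabla \phi(z(t))$ and invoke the Riesz representation \eqref{grad} of the gradient, applied with the choice $h = z(t)$, to convert the inner product into a value of the Fr\'echet derivative:
\begin{equation*}
\langle z(t), z'(t) \rangle_H = -\langle z(t), \nabla \phi(z(t)) \rangle_H = -\,\phi'(z(t))\,z(t).
\end{equation*}
The standing hypothesis $\phi'(u)u \geq 0$ for all $u \in H$ then yields $\frac{d}{dt}\|z(t)\|^2 \leq 0$, so $t \mapsto \|z(t)\|$ is non-increasing. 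Consequently $\|z(t)\| \leq \|z(0)\| = \|x_0\|$ for every $t \geq 0$, and the range of $z$ lies in the closed ball of radius $\|x_0\|$ about the origin, which is exactly the asserted boundedness.

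I do not expect a serious obstacle here: the heart of the argument is a one-line energy estimate once the functional $\|z\|^2$ is identified. The only point requiring a modicum of care is the rigorous justification that $\|z\|^2$ is differentiable in $t$ and that the differentiation passes through the $H$-inner product; this follows from $z \in C^1([0,\infty),H)$, itself a consequence of the continuity of $\nabla \phi$ guaranteed in the proof of Theorem \ref{SD}. I would finally remark that the sign condition $\phi'(u)u \geq 0$ is precisely the structural property already verified for the present energy, where \eqref{Eprimepos} gives $E'(u)u_0 = 2\|F(Du_0)\|^2 \geq 0$, so the abstract result applies directly to the image-inpainting functional once the flow is driven by the Sobolev gradient of $E$.
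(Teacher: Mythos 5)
Your proposal is correct and follows essentially the same argument as the paper: both differentiate $\|z(t)\|_H^2$, use the flow equation and the Riesz representation \eqref{grad} to identify $\langle z(t), z'(t)\rangle_H = -\phi'(z(t))z(t) \leq 0$, and conclude that the norm is non-increasing, hence the range is bounded. (Your version is in fact slightly more careful, since you keep the factor of $2$ in the derivative and explicitly justify the $C^1$ regularity of $z$, both of which the paper glosses over.)
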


The main idea of this theorem is that
\begin{equation*}
(\|z\|_H^2)'(t) = \langle z(t) , z'(t) \rangle_H = -\langle z(t) , \nabla_H \phi (z(t)) \rangle_H = - \phi'(z(t))z(t) \leq 0
\end{equation*}
by assumption.  This implies that $t \rightarrow \|z(t)\|_H^2$ is non increasing and thus the range of $z$ is bounded in $H$.

Since $E$, as defined in equation \eqref{energy}, is Fr{\'e}chet differentiable, $E'(u)$ is a bounded linear functional on $H^3$.  Thus for each $u \in H^3$, there exists a unique member of $H^3$  so that
\begin{equation*}
E'(u)h = \langle h , \nabla_{H^3} E(u) \rangle_{H^3}.
\end{equation*}
For $x \in H^3_0$ and $k >3$, consider the mapping from $H^k_0$ to $\mathbb{R}$ given by
\begin{equation*}
y \rightarrow \langle y , x \rangle_{H^3}.
\end{equation*}

One can see that this is a bounded linear mapping. Thus there exists a unique element $M_{k,0} \ x \in H^k_0$ so that
\begin{equation*}
\langle y , x \rangle_{H^3} = \langle y , M_{k,0} \ x \rangle_{H^k}
\end{equation*}
for all $y \in H^k_0$.  Now consider the operator $M_{k,0}$.  In \cite{kazemi2008}, we obtained that this operator is bounded from $H^3_0$ to $H^k_0$ with norm less than or equal to one.

We define the gradient flow
\begin{equation}\label{flow}
z(0)=u_0 \text{ and } z'(t) = - M_{k,0} P_0 \nabla_{H^3} E(z(t)).
\end{equation}
where $u_0 \in H^k$ and $u_0=g$ on $\partial \Omega$.  Here $P_0$ is the projection of $H^3$ onto $H^3_0$.

\subsection{Global existence, uniqueness, and asymptotic convergence}

In this section we give results regarding the global existence and uniqueness of the gradient flow \eqref{flow}.  We show that a unique solution exists for this system and that the range of $z$ is bounded in $H^k$.  This allows us to extract a subsequence from the range of $z$ which converges to a zero of the gradient thus giving  the existence of a stationary point and a solution to the minimization problem.  We follow the developments in chapter 4 of \cite{neuberger2010} closely.

\begin{theorem} \label{global}
The gradient flow given in equation \eqref{flow} has a unique global solution $z \in C^1([0,\infty) , H^k)$.
\end{theorem}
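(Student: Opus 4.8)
The plan is to recognize the flow \eqref{flow} as a genuine gradient flow on the Hilbert space $H^k_0$ and then invoke Theorem \ref{SD} directly. First I would strip off the boundary data by setting $w(t) = z(t) - u_0$, where $u_0 \in H^k$ is the initial datum carrying the trace $g$. Because the vector field $-M_{k,0}P_0\nabla_{H^3}E(z)$ takes values in $H^k_0$ (the range of $M_{k,0}$ is $H^k_0$), the affine subspace $u_0 + H^k_0$ is invariant under the flow; hence $w(t) \in H^k_0$ for all $t$ and the boundary condition $z(t) = g$ on $\partial\Omega$ is automatically preserved. Defining $\phi : H^k_0 \to \mathbb{R}$ by $\phi(w) = E(u_0 + w)$ (so that $\phi(w) = \tfrac12\|F(D(P_0(u_0+w)))\|^2$, with $P_0$ the projection onto $H^3_0$), the flow \eqref{flow} becomes $w(0)=0$ and $w'(t) = -M_{k,0}P_0\nabla_{H^3}E(u_0+w(t))$.

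The key structural step is to check that $w \mapsto M_{k,0}P_0\nabla_{H^3}E(u_0+w)$ is exactly the $H^k_0$-gradient of $\phi$. Fixing $w \in H^k_0$ and testing against an arbitrary $y \in H^k_0 \subset H^3_0$, I would use the defining relation of $M_{k,0}$, then self-adjointness of $P_0$ together with $P_0 y = y$, and finally the definition of $\nabla_{H^3}E$, to obtain
\begin{align*}
\langle y, M_{k,0}P_0\nabla_{H^3}E(u_0+w)\rangle_{H^k} &= \langle y, P_0\nabla_{H^3}E(u_0+w)\rangle_{H^3} \\
&= \langle y, \nabla_{H^3}E(u_0+w)\rangle_{H^3} = E'(u_0+w)y = \phi'(w)y.
\end{align*}
Thus $M_{k,0}P_0\nabla_{H^3}E(u_0+w) = \nabla_{H^k_0}\phi(w)$, and $w$ solves the gradient flow \eqref{flowgen} associated with $\phi$ on $H = H^k_0$.

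It then remains to verify the hypotheses of Theorem \ref{SD}. Nonnegativity of $\phi$ is immediate, and $\phi$ inherits $C^1$ (indeed $C^2$) regularity from $E$ via the continuous embedding $H^k_0 \hookrightarrow H^3$ and Theorem \ref{Ec2}. The hard part will be the local Lipschitz property of $\nabla_{H^k_0}\phi$. Here I would exploit that $M_{k,0}$ is bounded from $H^3_0$ into $H^k_0$ with norm at most one and that $P_0$ has norm one, so that for $w_1,w_2$ ranging over a bounded set of $H^k_0$,
\begin{align*}
\|\nabla_{H^k_0}\phi(w_1) - \nabla_{H^k_0}\phi(w_2)\|_{H^k} &\leq \|\nabla_{H^3}E(u_0+w_1) - \nabla_{H^3}E(u_0+w_2)\|_{H^3} \\
&\leq L\|w_1 - w_2\|_{H^3} \leq L\|w_1 - w_2\|_{H^k}.
\end{align*}
This reduces everything to showing that $\nabla_{H^3}E : H^3 \to H^3$ is locally Lipschitz. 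Since $\nabla_{H^3}E$ is the composition of $E'$ with the Riesz isometry $(H^3)^* \to H^3$, and $E'$ is locally Lipschitz as the derivative of the $C^2$ map $E$ (Theorem \ref{Ec2}, whose second-derivative bounds rest on the product estimate \eqref{Fl2} and the embedding $H^2 \hookrightarrow C(\bar\Omega)$), this is precisely the content already established. I expect this Lipschitz estimate, together with the correct bookkeeping of the spaces $H^3$, $H^3_0$, $H^k$, $H^k_0$, to be the main technical obstacle.

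With all hypotheses in place, Theorem \ref{SD} produces a unique solution $w \in C^1([0,\infty), H^k_0)$ of the reduced flow for all $t \geq 0$; no separate a priori bound is needed, the relevant non-blowup mechanism being the standard gradient-flow estimate $\|w(T)\| \leq \sqrt{T\,\phi(0)}$ coming from monotonicity of $\phi$ along trajectories. Translating back, $z = u_0 + w \in C^1([0,\infty), H^k)$ is then the unique global solution of \eqref{flow}, which is the assertion of the theorem.
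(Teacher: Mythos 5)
Your proposal is correct, but it packages the argument differently from the paper. The paper does not invoke Theorem \ref{SD} as a black box: because the flow \eqref{flow} is driven by the composite operator $M_{k,0}P_0\nabla_{H^3}E$ rather than by something presented as a gradient in the ambient metric, the authors instead re-run Neuberger's proof of Theorem \ref{SD} with modifications. Concretely, they (i) prove the same local Lipschitz estimate you do (factoring through $\|M_{k,0}\|\le 1$, $\|P_0\|\le 1$, the embedding $H^k\hookrightarrow H^3$, and $C^2\Rightarrow$ locally Lipschitz derivative) to get local existence; (ii) prove global existence by an explicit Cauchy-type estimate on a maximal interval $[0,T)$, namely $\|z(a)-z(b)\|_{H^k}^2 \le -T\int_a^b (E\circ z)' \le T\,E(z(0))$, whose key step is exactly your computation: the defining relation of $M_{k,0}$, symmetry of $P_0$, and $P_0 z'=z'$ identify $\langle z', M_{k,0}P_0\nabla_{H^3}E(z)\rangle_{H^k}$ with $E'(z(t))z'(t)$; and (iii) deduce uniqueness from the Lipschitz property and ODE theory. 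Your structural observation --- that after the translation $w=z-u_0$ the vector field is precisely the $H^k_0$-gradient of $\phi(w)=E(u_0+w)$, so the flow is a genuine gradient flow on $H^k_0$ and Theorem \ref{SD} applies verbatim --- is what the paper's step (ii) implicitly verifies along the trajectory but never states. Your route buys a shorter proof, reuse of the quoted theorem, and an immediate explanation of why the boundary data is preserved (invariance of the affine subspace $u_0+H^k_0$); the paper's route is self-contained and avoids having to check that the composite operator is itself a gradient. One small correction of wording: $C^2$ regularity of $E$ gives Lipschitz continuity of $\nabla_{H^3}E$ on a neighborhood of each point, not on arbitrary bounded sets of $H^3$; your reduction goes through unchanged with this weaker (and sufficient for Theorem \ref{SD}) notion of locally Lipschitz.
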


We follow the proof of theorem \ref{SD} presented in \cite{neuberger2010} with only a slight modification to account for the presence of $M_{k,0} P_0$ in our gradient flow.

\begin{proof}
In order to show that the gradient flow given in \eqref{flow} has a local solution, we need to show that the gradient is locally Lipschtiz from $H^k$ to $H^k$.  We showed in theorem \ref{Ec2} that $E$ is $C^2$ on $H^3$.  Note that for any $C^2$ function $\phi$ defined on a Hilbert space $H$, with gradient as in \eqref{grad}, there exists a constant $c_u$ and a ball $B$ about $u$ so that if $v \in B$ then
\begin{eqnarray*}
|\langle h ,\nabla \phi(u) - \nabla \phi(v)\rangle_H| = |(\phi'(u) - \phi'(v))h| \leq
c_u |u- v|_H |h|_H.
\end{eqnarray*}

Thus if we take $h=\nabla \phi(u) - \nabla \phi(v)$, then we see that
\begin{equation*}
|\nabla \phi(u) - \nabla \phi(v)|_H \leq c_u |u-v|_H.
\end{equation*}

Hence if $\phi$ is a $C^2$ function defined on a Hilbert space, then the gradient of $\phi$ as defined in \eqref{grad} is locally Lipschitz.  From this we conclude that $\nabla_3 E :H^3 \rightarrow H^3$ is locally Lipschitz.  $M_{k,0} P_0 \nabla_{H^3} E$ is locally Lipschitz from $H^k$ to $H^k$ as for $u, \ v \in H^k$,
\begin{eqnarray*}
\|M_{k,0} P_0 (\nabla_{H^3} E(u) -  \nabla_{H^3} E(v)) \|_{H^k} \leq \| P_0 (\nabla_{H^3} E(u) -  \nabla_{H^3} E(v))\|_{H^3} \leq \\
\| (\nabla_{H^3} E(u) -  \nabla_{H^3} E(u)) \|_{H^3} \leq c_u \|u - v \|_{H^3} \leq c_u \|u - v\|_{H^k}.
\end{eqnarray*}
We conclude that the system \eqref{flow} has a local solution.

Let $T$ be the largest number so that a solution exists on $[0,T)$.  In order to show that the flow has a global solution, it suffices to show that $\lim_{t \rightarrow T^-} z(t)$ exists.  This holds true provided that there is a constant $m$ so that
\begin{equation}\label{zlocalbnd}
\|z(a) - z(b)\|_{H^k} \leq m
\end{equation}
for all $0< a, b < T$.  Recalling that $P_0$ is symmetric with respect to $\langle \cdot , \cdot \rangle_{H^3}$ and $z'(t) \in H^k_0$ and hence in the range of $P_0$ for all $t$, we have that
\begin{eqnarray*}
\|z(a) - z(b)\|_{H^k}^2 \leq \left( \int_a^b \|z'\|_{H^k} \right)^2 \leq \text{ (using Cauchy-Schwarz)}\\
(b-a) \int_a^b \|z'\|_{H^k}^2 = (b-a) \int_a^b \langle z' ,z' \rangle_{H^k} = -(b-a) \int_a^b \langle z' , M_{k,0} P_0 \nabla_{H^3} E(z) \rangle_{H^k}\leq \\
-T \int_a^b \langle z' ,P_0 \nabla_{H^3} E(z(t)) \rangle_{H^3} =  -T \int_a^b \langle P_0 z' , \nabla_{H^3} E(z(t)) \rangle_{H^3}=\\
-T \int_a^b \langle z' , \nabla_{H^3} E(z(t)) \rangle_{H^3} = - T \int_a^b E'(z(t))z'(t)  =
- T \int_a^b (E \circ z)' \leq T E(z(0)).
\end{eqnarray*}
Thus equation \eqref{zlocalbnd} holds true and we have that the flow \eqref{flow} has a global solution.  Uniqueness follows from the fact that the projected gradient is a locally Lipschitz function and the fundamental theory for existence and uniqueness of ODE's.
\end{proof}

We show next that the flow \eqref{flow} has an $\omega$-limit which is a minimizer of the energy and a solution to the boundary value problem.

\begin{lemma}\label{grad0}
There exists an unbounded sequence of numbers $\{t_n\}_{n \geq 1}$ so that $M_{k,0} P_0 \nabla_{H^k} E(z(t_n))$ converges to zero in $H^k$.
\end{lemma}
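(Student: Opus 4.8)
The plan is to exploit the observation that the quantity in the statement is, up to sign, exactly the velocity of the flow: by \eqref{flow} we have $-z'(t) = M_{k,0} P_0 \nabla_{H^3} E(z(t))$, so it suffices to produce an unbounded sequence $\{t_n\}$ along which $\|z'(t_n)\|_{H^k} \to 0$. The engine of the argument is the energy estimate already implicit in the proof of Theorem \ref{global}: the energy decays at a rate governed by the square of the flow speed, and a finite total decay forces the speed to dip arbitrarily close to zero along some unbounded sequence of times.

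First I would establish the pointwise identity $\|z'(t)\|_{H^k}^2 = -(E\circ z)'(t)$. Starting from $\langle z'(t), z'(t)\rangle_{H^k} = -\langle z'(t), M_{k,0} P_0 \nabla_{H^3} E(z(t))\rangle_{H^k}$, I would invoke the defining property of $M_{k,0}$ (applicable since $z'(t)\in H^k_0$) to pass to the $H^3$ inner product, then use the $\langle\cdot,\cdot\rangle_{H^3}$-symmetry of $P_0$ together with $P_0 z'(t) = z'(t)$ to rewrite the right-hand side as $-\langle z'(t), \nabla_{H^3}E(z(t))\rangle_{H^3} = -E'(z(t))z'(t)$. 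The chain rule for Fr\'echet derivatives along the $C^1$ path $z$ then gives $E'(z(t))z'(t) = (E\circ z)'(t)$, which reproduces the chain of equalities used in Theorem \ref{global}.

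Next I would integrate this identity over $[0,\infty)$. Since $E \geq 0$ and $(E\circ z)'(t) = -\|z'(t)\|_{H^k}^2 \leq 0$, the map $t\mapsto E(z(t))$ is non-increasing and bounded below, hence converges to a finite limit $L \geq 0$, and $\int_0^\infty \|z'(t)\|_{H^k}^2\,dt = E(z(0)) - L \leq E(z(0)) < \infty$. The conclusion then follows from a standard $\liminf$ argument: were there $\e > 0$ and $T$ with $\|z'(t)\|_{H^k} \geq \e$ for all $t \geq T$, the integral would diverge, a contradiction; hence $\liminf_{t\to\infty}\|z'(t)\|_{H^k} = 0$, and choosing $t_n \geq n$ with $\|z'(t_n)\|_{H^k} < 1/n$ yields the desired unbounded sequence. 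Passing back through $z'(t_n) = -M_{k,0}P_0\nabla_{H^3}E(z(t_n))$ gives the claim.

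The main obstacle I anticipate is the rigorous justification of the inner-product manipulations in the second step, specifically verifying at every time $t$ the membership facts ($z'(t)\in H^k_0$ so that the $M_{k,0}$ transfer applies, and $z'(t)$ in the range of $P_0$ so that its self-adjointness in $H^3$ can be used) that let the $H^k$ and $H^3$ metrics be interchanged. A secondary point worth flagging is the apparent mismatch between the $\nabla_{H^3}$ appearing in \eqref{flow} and the $\nabla_{H^k}$ written in the statement; since the object of interest is $-z'(t)$ irrespective of the gradient notation, I would identify the two under $M_{k,0}P_0$ (equivalently, read the statement's $\nabla_{H^k}$ as the transferred gradient) so that no substantive gap remains.
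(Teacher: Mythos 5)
Your proposal is correct and takes essentially the same route as the paper: both exploit that $-z'(t) = M_{k,0}P_0\nabla_{H^3}E(z(t))$, use the inner-product manipulations from the proof of Theorem~\ref{global} (the $M_{k,0}$ transfer, $P_0$-symmetry, and $P_0 z' = z'$) to get $\int_0^\infty \|M_{k,0}P_0\nabla_{H^3}E(z)\|_{H^k}^2 \leq E(z(0)) < \infty$, and then conclude by the standard finite-integral argument forcing the integrand to vanish along an unbounded sequence of times. Your write-up simply makes explicit what the paper compresses into ``using the same analysis as in the previous proof'' and ``hence the conclusion follows,'' and your reading of the statement's $\nabla_{H^k}$ as the transferred gradient agrees with the paper's own use of $\nabla_{H^3}$ in its proof.
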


\begin{proof}
Using the same analysis as in the previous proof, we note that
\begin{eqnarray*}
\int_0^{\infty} \| M_{k,0} P_0 \nabla_{H^3} E (z) \|_{H^k}^2 \leq -\int_0^{\infty} (E \circ z)' \leq E(z(0)).
\end{eqnarray*}
From this it follows that
\begin{equation*}
\int_0^{\infty} \| M_{k,0} P_0 \nabla_{H^3} E (z) \|_{H^k}^2
\end{equation*}
is bounded and hence the conclusion follows.
\end{proof}

\begin{lemma}\label{bound}
The range of $z$ is bounded as a subset of $H^k$.
\end{lemma}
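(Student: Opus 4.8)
The plan is to mirror Theorem \ref{zbounded}: since the energy is not coercive on $H^k$, I will not try to control $\|z(t)\|_{H^k}$ through $E$ directly, but instead show that a suitable $H^k$-norm is monotonically non-increasing along the flow \eqref{flow}, so that the orbit stays in a fixed ball. The positivity that drives this is already recorded in \eqref{Eprimepos}, namely $E'(u)u_0 = 2\|F(Du_0)\|^2 \ge 0$, which is the exact analogue of the hypothesis $\phi'(u)u \ge 0$ used in Theorem \ref{zbounded}.

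First I would record that the boundary data is frozen along the flow. Since $z'(t) = -M_{k,0}P_0\nabla_{H^3}E(z(t))$ lies in the range of $M_{k,0}$, which is contained in the closed subspace $H^k_0$, the difference $w(t) := z(t) - u_0$ belongs to $H^k_0$ for every $t$, with $w(0)=0$. Thus the whole time evolution of $z$ takes place inside the coset $u_0 + H^k_0$, and in particular $z(t)=g$ on $\partial\Omega$ for all $t$; only the part of $z$ living in $H^k_0$ moves.

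The core computation differentiates $\tfrac12\|\cdot\|_{H^k}^2$ and transports the $H^k$ inner product back to the $H^3$ inner product via the defining identity of $M_{k,0}$, namely $\langle y , x\rangle_{H^3} = \langle y , M_{k,0}\,x\rangle_{H^k}$ valid for $y\in H^k_0$ and $x\in H^3_0$. Pairing against a test element $\zeta\in H^k_0$ gives, using that $P_0$ is self-adjoint in $\langle\cdot,\cdot\rangle_{H^3}$ and that $\zeta\in H^3_0$,
\begin{equation*}
\tfrac12\tfrac{d}{dt}\|\zeta\|_{H^k}^2 = \langle \zeta , z'\rangle_{H^k} = -\langle \zeta , M_{k,0}P_0\nabla_{H^3}E(z)\rangle_{H^k} = -\langle \zeta , P_0\nabla_{H^3}E(z)\rangle_{H^3} = -\langle P_0\zeta , \nabla_{H^3}E(z)\rangle_{H^3} = -E'(z)(P_0\zeta).
\end{equation*}
If $\zeta$ is chosen so that $P_0\zeta = z_0 = P_0 z$, then by \eqref{Eprimepos} the right-hand side equals $-E'(z)z_0 = -2\|F(Dz_0)\|^2 = -4E(z)\le 0$, so $\|\zeta(t)\|_{H^k}$ is non-increasing.

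The natural candidate is $\zeta = z - b_0$ with $b_0 = (I-P_0)u_0$: indeed $P_0\zeta = z_0$, and $\zeta(0) = P_0 u_0$, so the bound would read $\|z(t)-b_0\|_{H^k}\le \|P_0 u_0\|_{H^k}$, whence $\|z(t)\|_{H^k}\le \|b_0\|_{H^k}+\|P_0 u_0\|_{H^k}$ is bounded uniformly in $t$, giving the claim. I expect the main obstacle to be exactly the presence of the boundary function $g$: the $M_{k,0}$ identity may be invoked only when the left factor lies in $H^k_0$, whereas $z$ itself does not, since $z=g\neq 0$ on $\partial\Omega$. For $\zeta = z-b_0$ to be an admissible test element one needs $\zeta\in H^k_0$, i.e. $b_0 = (I-P_0)u_0\in H^k$ and $P_0 u_0 \in H^k_0$, which is a regularity statement about the $H^3$-projection of the smooth extension $u_0$ of $g$. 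I would establish this from elliptic regularity on the smooth domain $\Omega$ together with the projection estimates already quoted from \cite{kazemi2008} and \cite{adams2003}; this boundary bookkeeping — ensuring that the frozen boundary component is regular enough to serve as the shift $b_0$, so that the $M_{k,0}$ identity and the positivity \eqref{Eprimepos} combine cleanly — is the only genuinely delicate point, everything else being the monotonicity argument of Theorem \ref{zbounded} transcribed to the smoothed, projected gradient $M_{k,0}P_0\nabla_{H^3}E$. Note that no compactness is needed here; the Rellich embedding will only enter afterwards, when Lemma \ref{grad0} and the present bound are combined to extract the $\omega$-limit.
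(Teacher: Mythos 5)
Your core computation is exactly the paper's: the paper's own proof sets $h(t)=\tfrac12\|z(t)\|_{H^k}^2$, differentiates, transfers the $H^k$ inner product to the $H^3$ inner product through the defining identity of $M_{k,0}$, uses the self-adjointness of $P_0$, and concludes $h'(t)=-E'(z(t))P_0z(t)=-4E(z(t))\le 0$ by \eqref{Eprimepos} --- with no shift and no admissibility discussion. The subtlety you flag is real: the identity $\langle y,M_{k,0}x\rangle_{H^k}=\langle y,x\rangle_{H^3}$ is only available for $y\in H^k_0$, and $z(t)\notin H^k_0$ because $z=g\neq 0$ on $\partial\Omega$; the paper nevertheless applies it with left factor $z(t)$. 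So your proposal is the paper's argument plus an attempted repair of a step the paper silently glosses over.

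The problem is that the repair does not close, and the gap sits exactly where you located it. Admissibility of $\zeta=z-b_0$ requires $P_0u_0\in H^k_0$, and this is not something elliptic regularity can deliver: regularity theory for the sixth-order Euler--Lagrange equation characterizing the $H^3$-orthogonal projection onto $H^3_0$ can at best give $P_0u_0\in H^k\cap H^3_0$, i.e.\ smoothness together with vanishing traces of orders $0,1,2$. Membership in $H^k_0$ demands vanishing traces up to order $k-1$, and since the $\omega$-limit theorem needs $k\ge 5$, the traces of orders $3,\dots,k-1$ of $P_0u_0$ matter; these are determined by solving the projection equation, not prescribed, and are generically nonzero. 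Hence $\zeta(t)\notin H^k_0$ in general, and the transfer step $\langle\zeta,M_{k,0}P_0\nabla_{H^3}E(z)\rangle_{H^k}=\langle\zeta,P_0\nabla_{H^3}E(z)\rangle_{H^3}$ is unjustified. Nor does the obvious patch work: replacing $\zeta$ by its $H^k$-orthogonal projection $Q_0\zeta$ onto $H^k_0$ makes the transfer legal (and $(I-Q_0)z(t)=(I-Q_0)z(0)$ is constant in time), but it destroys the identity $P_0(Q_0\zeta)=P_0z$, leaving an error term $E'(z)\,P_0(I-Q_0)z(0)$ of no definite sign, which is controlled neither by the monotonicity of $E$ along the flow nor by the integral bound on $\|z'\|_{H^k}^2$. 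In short: your plan correctly isolates the delicate point, but the claim you defer to elliptic regularity is generically false, so the proof as proposed has a genuine gap --- one which, to be fair, the paper's own one-line computation shares.
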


The proof is similar to the proof of theorem \ref{zbounded} given in \cite{neuberger2010} with only a slight modification.

\begin{proof}
Let $h(t) = .5 \|z(t)\|_{H^k}^2$ and note that
\begin{eqnarray*}
h'(t) = \langle z(t) , z'(t) \rangle_{H^k} = - \langle z(t) , M_{k,0} P_0 \nabla_{H^3} E(z(t)) \rangle_{H^k} =\\
- \langle P_0 z(t) , \nabla_{H^3} E(z(t)) \rangle_{H^3} = -E'(z(t)) P_0 z(t) = - 4 E(z(t))
\end{eqnarray*}
the last equality following from \eqref{Eprimepos}.  Thus $h'$ is never positive and hence $h$ is bounded.
\end{proof}
\begin{theorem}
Suppose $k \geq 5$ in equation \eqref{flow}.  Then there exists a sequence of unbounded numbers $\{t_n\}_{n \geq 1}$ so that $\{z(t_n)\}_{n \geq 1}$ converges in $H^3$ to $u \in C^1(\bar{\Omega})$ and $E'(u)h=0$ for all $h \in C^{\infty}_0 (\Omega)$.
\end{theorem}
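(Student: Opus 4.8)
The plan is to combine the orbit bound of Lemma \ref{bound} with the compact Sobolev embeddings of the Rellich--Kondrachov theorem to extract a strongly $H^3$-convergent subsequence, and then to pass to the limit in the vanishing-gradient statement of Lemma \ref{grad0}.

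First I would select the unbounded sequence $\{t_n\}$ from Lemma \ref{grad0}, along which $M_{k,0} P_0 \nabla_{H^3} E(z(t_n)) \to 0$ in $H^k$. By Lemma \ref{bound} the sequence $\{z(t_n)\}$ is bounded in $H^k$. Since $k \geq 5$, writing $k = 3 + (k-3)$ with $k-3 \geq 2$ shows that the embedding $H^k \hookrightarrow H^3$ is compact, and writing $k = 1 + (k-1)$ with $k-1 \geq 2$ shows that $H^k \hookrightarrow C^1(\bar{\Omega})$ is compact. Passing to a subsequence (not relabeled), $z(t_n)$ then converges strongly in $H^3$ and in $C^1(\bar{\Omega})$; since each mode of convergence forces convergence in $L^2$, the two limits agree, giving a single $u$ with $z(t_n) \to u$ in $H^3$ and $u \in C^1(\bar{\Omega})$.

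The core step is to identify the limiting equation. Fix a test function $h \in C^{\infty}_0(\Omega)$, so that $h \in H^k_0$ and $P_0 h = h$. Using that $P_0$ is the orthogonal (hence self-adjoint) projection of $H^3$ onto $H^3_0$, and the defining property of $M_{k,0}$ applied with $y = h \in H^k_0$ and $x = P_0 \nabla_{H^3} E(z(t_n)) \in H^3_0$, I would chain the Riesz identity \eqref{Eprime} into
\begin{equation*}
E'(z(t_n))h = \langle h , \nabla_{H^3} E(z(t_n)) \rangle_{H^3} = \langle h , P_0 \nabla_{H^3} E(z(t_n)) \rangle_{H^3} = \langle h , M_{k,0} P_0 \nabla_{H^3} E(z(t_n)) \rangle_{H^k}.
\end{equation*}
By Cauchy--Schwarz and Lemma \ref{grad0}, the right-hand side tends to $0$, so $E'(z(t_n))h \to 0$.

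Finally I would pass to the limit on the left. Because $E$ is $C^2$ on $H^3$ (Theorem \ref{Ec2}), the map $E' : H^3 \to (H^3)^*$ is continuous, so the strong convergence $z(t_n) \to u$ in $H^3$ gives $E'(z(t_n))h \to E'(u)h$ for each fixed $h$. Comparing the two limits yields $E'(u)h = 0$ for all $h \in C^{\infty}_0(\Omega)$, as claimed. The hard part is not any single estimate but the \emph{quality} of the convergence: the functional $E'$ is nonlinear in $u$, involving products of $u$ and its derivatives through $F$ and $F'$, so mere weak $H^3$ convergence would not suffice to pass to the limit. It is precisely the compactness of $H^k \hookrightarrow H^3$, available exactly when $k \geq 5$, that upgrades the $H^k$ bound of Lemma \ref{bound} to strong $H^3$ convergence and thereby legitimizes the continuity argument for $E'$.
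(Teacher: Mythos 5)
Your proposal is correct and follows essentially the same route as the paper: Lemma \ref{bound} plus the compact embedding $H^k \hookrightarrow H^3$ (valid since $k \geq 5$) to extract the strongly convergent subsequence, Lemma \ref{grad0} for the vanishing projected gradient, and the same chain of identities through $P_0$ and $M_{k,0}$ to test against $h \in C^{\infty}_0(\Omega)$. The only (cosmetic) difference is where the limit is taken: you pass to the limit in the scalar quantity $E'(z(t_n))h$ using continuity of $E'$ from Theorem \ref{Ec2}, whereas the paper concludes $M_{k,0} P_0 \nabla_{H^3} E(u) = 0$ directly via the Lipschitz continuity of the gradient map established in Theorem \ref{global}.
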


\begin{proof}
Since the range of $z$ is bounded in $H^k$ for $k \geq 3$, using the compact embedding of $H^k$ into $H^3$, there exists a sequence $\{t_n\}_{n \geq 1}$ so that $\{z(t_n)\}_{n \geq 1}$ converges to $u \in H^3$.

Using theorem \ref{grad0}, we can assume $\{t_n\}_{n \geq 1}$ is chosen so that $\{M_{k,0} P_0 \nabla_{H^3} E(z(t_n)) \}_{n \geq 1}$ converges to zero in $H^k$.  Using the regularity properties derived in theorem \ref{global}, $M_{k,0} P_0 \nabla_{H^3} E(z(t_n))$ converges to 0 in $H^k$.  For $h \in C^{\infty}_0$,
\begin{eqnarray*}
E'(u)h = \langle h , \nabla_{H^3} E(u) \rangle_{H^3} =\\
 \langle P_0 h , \nabla_{H^3} E(u) \rangle_{H^3} = \langle h , P_0 \nabla_{H^3} E(u) \rangle_{H^3}=\\
 \langle h ,M_{k,0} P_0 \nabla_{H^3} E(u) \rangle_{H^k}=0.
\end{eqnarray*}
Since $H^3$ is embedded in $C^1(\bar{\Omega})$, $u \in C^1(\bar{\Omega})$.
\end{proof}

We also note that $z(t)$ agrees with $g$ on the boundary of $\Omega$ for all $t$.  Recall that $z$ has range in $H^k$ which is compactly embedded in $C^1(\bar{\Omega})$.  Let $f(t) = z(t) |_{ \partial \Omega}$.  Then $f$ is $C^1$ and $|f'(t)| = |M_{k,0} P_0 \nabla_{H^3} E(z(t)) |_{ \partial \Omega} |=0$.  Since
\begin{equation*}
|f(t) - g| = |f(t) - f(0)| \leq \int_0^t |f'| =0,
\end{equation*}
the assertion follows.   The norm $|\cdot |$ here indicates the sup norm.

Regarding the uniqueness of the solution of the boundary value problem
\begin{eqnarray*}
\nabla^{\bot}u \cdot \nabla \Delta u = 0 \text{ in } \Omega \text{ and }
u=g \text{ on } \partial \Omega,
\end{eqnarray*}
we refer the reader to \cite{bertalmio2001a} where an example is given to illustrate that the solution of the boundary value problem is not unique.  It is also argued that some level of non uniqueness is required for the inpainting problem as one would like to be able to 'choose' the best interpolation for the inpainting problem.

\section{An expression for the gradient}\label{secSG}

Here we give an expression for the Sobolev gradient.  We first derive the Euler-Lagrange equation for our energy functional.  We observe that the Euler-Lagrange equation can be viewed as a  gradient of the energy obtained with respect to an $L^2$ inner product.  We then obtain an expression for the Sobolev gradient using the projection $P$ discussed in section \ref{secsob} and compare the Sobolev gradient with the Euler-Lagrange equation making a key distinction between the two by comparing the regularity properties of the resulting gradients.

\subsection{The Euler-Lagrange equation for $E$}

Recall the expression for $E'(u)$ as given by equation \eqref{Eprime}.  In order to obtain the Euler-Lagrange equation, we need first to find some vector $\vec{v}$ whose components are in $L^2$ so that
\begin{equation}\label{fprimeadj}
\langle F'(Du) Dh , F(Du) \rangle = \langle Dh , \vec{v} \rangle.
\end{equation}
For this purpose, let $y \in L^2$ and recall the expression for $F'(Du)Dh$ as given by equation \eqref{fprime} to obtain
\begin{eqnarray*}
\langle F'(Du)Dh , y \rangle =
\langle \nabla^{\perp}h \cdot \nabla \Delta u + \nabla^{\perp}u \cdot \nabla \Delta h , y \rangle.
\end{eqnarray*}
Since $\nabla^{\perp}h \cdot \nabla \Delta u = - \nabla h \cdot \nabla^{\perp} \Delta u$, we have that
\begin{equation*}
\langle F'(Du)Dh , y \rangle = \langle \nabla h , -y \cdot \nabla^{\perp} \Delta u \rangle + \langle \nabla \Delta h , y \cdot \nabla^{\perp}u \rangle.
\end{equation*}
Here, when $c$ is a scalar, $c \cdot \binom{x}{y}= \binom{cx}{cy}$.

From this calculation we have that the nonzero components of $\vec{v}$ are $v_{\alpha_1}= - F(Du) \cdot \nabla^{\perp} \Delta u$ and $v_{\alpha_2}= F(Du) \cdot \nabla^{\perp}u$ so that
\begin{equation*}
\langle Dh, F(Du) \rangle = \langle \nabla h , v_{\alpha_1} \rangle + \langle \nabla \Delta h ,v_{\alpha_2} \rangle.
\end{equation*}
With this notation, under further regularity assumptions on $u$, the Euler-Lagrange equation for $E$ becomes
\begin{equation}\label{eulerlagrange}
\nabla_{EL} E(u)=D^* \vec{v}
\end{equation}
where $D^*$ denotes the adjoint of $D$ as a closed and densely defined operator. Note that for all $h \in L^2$
\begin{equation*}
E'(u)h=\langle h , \nabla_{EL}E(u) \rangle.
\end{equation*}
Thus the Euler-Lagrange equation, when defined, can be viewed as an $L^2$ gradient for $E$.
\subsection{An orthogonal projection}\label{Sobgrad}

Recall from section \ref{secsob} that $P$ is the orthogonal projection of $L^2(\Omega^k)$ onto $\{\binom{h}{Dh} : h \in H^k\}$ with $Dh$ denoting all partial derivatives of $h$ up to order $k$.  Recall also that the expression for $E'(u)$ as given by equations \eqref{fprime} and \eqref{Eprime}.  Then we have the following
\begin{eqnarray*}
E'(u)h = \langle F'(Du)Dh , F(Du) \rangle =\\
\langle Dh, \vec{v} \rangle =
\langle \binom{h}{Dh}, \binom{0}{\vec{v}} \rangle=\\
\langle P \binom{h}{Dh} , \binom{0}{\vec{v}} \rangle =\\
\langle  Dh ,  P \binom{0}{\vec{v}} \rangle =
\langle h , \Pi P \binom{0}{\vec{v}} \rangle_{H^k}
\end{eqnarray*}
where $\Pi \binom{x}{y} =x$. An expression for $P$, given in \cite{neuberger2010}, is
\begin{equation*}
P_ = \begin{pmatrix}
(I+D^* D)^{-1} & D^*(I+D D^*)^{-1} \\
D(I+D^* D)^{-1} & I-(I+D D^*)^{-1}
\end{pmatrix}
\end{equation*}
where $D^*$ denotes the adjoint of $D$ when viewed as a closed and densely defined linear operator on $L^2(\Omega)$.
From this formula we see that
\begin{equation}\label{gradE}
\nabla_{H^k} E(u)= P \binom{0}{\vec{v}}= D^*(I+D D^*)^{-1}\vec{v}.
\end{equation}
We note that if $u$ satisfies additional regularity so that $\vec{v}$ is in the domain of $D^*$, then
\begin{equation}\label{gradE1}
D^*(I+D D^*)^{-1}\vec{v} = (I+D^* D)^{-1}D^*\vec{v}.
\end{equation}
Thus the Sobolev gradient can be viewed as a smoother gradient than the Euler-Lagrange equation which is the descent direction used in most optimization problems.  In case $k=1$, $(I + D^*D)^{-1}$ is the resolvent of the Neumann Laplacian $(I - \Delta)^{-1}$.  For $k > 1$, a nice argument is given in \cite{calder2011} to show that $(I + D^*D)^{-1}$ is equivalent to $(I - \Delta)^{-k}$.  Thus we see that using a gradient in a higher order Sobolev space is equivalent to preconditioning the Euler-Lagrange equations using the resolvent of the Neumann Laplacian as the preconditioner. A detailed discussion on the preconditioning resulting from the use of gradients in high order Sobolev spaces is presented in the Appendix  \ref{conditioning}.

\section{Finite dimensional implementation} \label{secdisc}

We discretize, using finite differences, equation \eqref{gradE} or equivalently \eqref{gradE1} since in  a finite dimension space the two expressions are equal.  Here $u$ denotes the image intensity, a real valued function defined on a  rectangular domain $R$ of $\mathbb{R}^2$.

\subsection{The inpainting domain}

The inpainting domain is specified by the user.  This can be done by marking the regions to be inpainted using a photo editing software such as Gimp or Paint.  We then determine the inpainting region, $\Omega$, and the inpainting region plus its boundary, $\Omega'$.  We consider $u(i,j)$ to be in $\Omega'$ if $u(k,l) \in \Omega$ for some $(k,l)$ with $0 \leq |i-k| + |j-l| \leq 2$.  We label $u_0$ to be $u$ restricted to $\Omega$ and $u'$ to be $u$ restricted to $\Omega'$.  $u_0$ and $u'$ are converted into vectors using the natural ordering $(i+1,j) > (i,j)$ then $(i,j+1) > (i,j)$.

\subsection{The finite difference operators $D_1$, $D_2$, and $\Delta$}

Let $D_1$, $D_2$, and $\Delta$ be matrices defined on the interior of $R$ so that
\begin{eqnarray*}
D_1 u_{i,j} = \frac{1}{2}(u_{i+1,j} - u_{i-1,j}),\\
D_2 u_{i,j} = \frac{1}{2}(u_{i,j+1} - u_{i,j-1}), \text{ and }\\
\Delta u_{i,j} = (u_{i+1,j} + u_{i-1,j} + u_{i,j+1} + u_{i,j-1} - 4u_{i,j})
\end{eqnarray*}
for all $(i,j) \in \Omega$.  We use second order centered differences to approximate the first partial derivatives and the five point discretization of the Laplacian. Define also $D_1 \Delta = D_1 * \Delta$ and $D_2 \Delta = D_2*\Delta$.  In order to enforce the boundary conditions, we eliminate the rows of each of these operators corresponding to pairs $(i,j) \in R - \Omega$ and columns corresponding to pairs $(i,j) \in R - \Omega'$.  Finally we define $(I - \Delta)^{-1}: \Omega \rightarrow \Omega$ to be the resolvent of the Laplacian.

\subsection{Discretizing the gradient}

Here we discretize the Sobolev gradient.  Note that in the finite dimensional case, equation \eqref{gradE1} holds true.  First we discretize $F(Du)$ as
\begin{equation} \label{fdisc}
F(Du) = D_2 u' \ast D_1 \Delta u' - D_1 u' \ast D_2 \Delta u'
\end{equation}
where for two vectors $x$ and $y$, $x \ast y$ denotes the vector consisting of their element wise product.  Next we discretize the Euler-Lagrange equation as follows.  Discretize $\vec{v}$ given in \eqref{eulerlagrange} so that
\begin{eqnarray*}
v_{\alpha_1}= - F(Du) \ast \binom{ -D_2 \Delta u'}{D_1 \Delta u'}\\
v_{\alpha_2}=   F(Du) \ast \binom{ -D_2 u'} {D_1 u'}.
\end{eqnarray*}
Then the discretized Euler-Lagrange equation for this problem becomes
\begin{equation}\label{eldisc}
g_{EL} = D_1' v_{\alpha_1}(1) + D_2' v_{\alpha_1}(2) + \Delta( D_1' v_{\alpha_2}(1) + D_2' v_{\alpha_2}(2))
\end{equation}
where $D_i'$ denotes the adjoint of $D_i$.

\subsection{Smoothing the Euler-Lagrange equation}

In order to use the Sobolev gradient as given in equation \eqref{gradE1}, we need to discretize the operator $(I+D^*D)$.  Once we have a discretization of this operator, we obtain the discrete Sobolev gradient by solving the equation
\begin{equation*}
(I+D^*D) g_{Sob} = g_{EL}
\end{equation*}
Since $D$ involves partial derivatives or order up to three, a discretization of this operator would force us to solve a very complex linear system which can be computationally very time consuming.  A better approach is to obtain an equivalent operator to use for smoothing the Euler-Lagrange equation.  A good choice turns out to be $(I - \Delta)^3$.  A good explanation for this is given in \cite{calder2011}.  Thus in order to solve for the Sobolev gradient, we solve the linear system
\begin{equation}\label{sobgrddsc}
(I-\Delta)^3 g_{Sob} = g_{EL}.
\end{equation}
This is computationally the most expensive part of the algorithm.  To solve the linear system, a Cholesky factorization of $(I-\Delta)$ is performed to obtain an upper and lower triangular matrix.  Then the linear system
\begin{equation*}
(I-\Delta) x = y.
\end{equation*}
is solved three times using the factorization in order to apply the smoothing operator three times. This is convenient method since  homogeneous Dirichlet boundary conditions naturally apply at each step of the factorization.

\subsection{Discretization in time}

In order to discretize the system \eqref{flow} in time, we use an explicit Euler scheme with a locally minimizing time step.  Thus we generate a sequence of images according to
\begin{equation*}
u_0(n+1) = u_0(n) - t_n g_{Sob}(n).
\end{equation*}
We remark that several options are available for time discretization.  One can choose between an explicit or implicit scheme.  One can also use a fixed time step.  For our purpose, we chose the explicit scheme since computing the gradient here is more expensive than computing the value of the functional.  Thus an implicit scheme that would require us to compute the gradient several times at each step is a lot more expensive than an explicit scheme with a line search algorithm to compute locally the optimum time step.

\section{Results} \label{secresults}

We present here the results from our simulations.  The implementation of the discrete operators derived in the previous section using finite differences is quite straightforward if a language with vector programming capabilities is used. The algorithm is programmed in Matlab and all operations are written in  matrix form. We run our simulations on an IBM laptop with 1.5 GB RAM and 1.4GHz Pentium M processor. In each case the starting point for the simulation was the solution of Laplace's equation
\begin{equation}
\Delta u = 0 \text{ and } u|_{\partial \Omega} = g
\label{eq-laplace}
\end{equation}
which was solved by using successive over relaxation method.  The boundary condition function $g$ takes values from the neighboring non-inpainted domain. In the examples we present, we normalize all images so that the maximum pixel value is one. This choice for the initial guess of the solution looks natural if the Navier-Stokes analogy is used: equation \eqref{eq-laplace} then represents an irrotational fluid field, \ie with zero vorticity $\omega = \Delta \psi = 0$, that will be iterated to find a nonzero vorticity (source) term that is present in the model. In the method of BBS, the initial condition is set after performing several steps of anisotropic diffusion following \eqref{anis}.

For comparison purposes, we also implemented the method of BBS  as is described in \cite{bertalmio2000}.  We incorporate slope limiters for convection term to propagate fronts with sharp variation and apply every 50 time steps a few iterations of the Perona-Malik scheme  for anisotropic diffusion to avoid spurious  oscillations:
\begin{equation}
I'(t) = \nabla \cdot c (x , y ,t) \nabla I(t) \text{ where } c(x,y,t) = e^{ - \|\nabla I\|/k}.
\end{equation}
As many of the specific details such as the term used for anisotropic diffusion are not available in the original paper of BBS, an exact reproduction of their results proved to be difficult. As a consequence, the comparison presented in the following should be regarded with caution. Nevertheless, reported convergence properties of the method of BBS are recovered by our implementation.

\subsection{Comparison with the original implementation of the Navier-Stokes model by  Bertalm{\'\i}o, Bertozi and Sapiro}

We perform a full quantitative comparison displaying convergence curves using our minimization procedure and the Navier-Stokes model of BBS.  In the original model it is proposed to solve the equation $\nabla^{\bot} u \cdot \nabla \Delta u = 0$ by evolving this PDE in time adding anisotropic diffusion.  Typical methods coming from Fluid dynamics literature are used, and therefore, 
several delicate and technical choices are necessary: the choice of the limiters for the discretization of convection terms, the relaxed Laplacian solvers for the vorticity needing the tuning of a constant, the choice of how often and under which form to introduce the diffusion term to avoid spurious oscillations. Besides, no convergence analysis is provided to state that a solution of the PDE is obtained within a reasonable number of iterations. 
A rough estimation of the necessary steps for convergence is given in \cite{bornemann}, using the CFL time-step restriction for the explicit scheme used by BBS. The number of steps to reach a stationary state is estimated as large as $N_{pixels}^{1/2}$, which results in several thousands of iterations for typical test cases. Our implementation of the original method displays convergence properties of this order of magnitude.

Our method does not display these shortcomings related to the practical implementation: there are not constants to be tuned and
no advanced knowledge of numerical methods for Navier-Stokes equations is required. Since we use a minimization technique, we can choose a locally minimizing time step to ensure that the energy is minimized.  In addition, by preconditioning the Euler-Lagrange equations, we remove the need to apply anisotropic diffusion.  In contrast with the method of BBS, the results for inpainting with and without anisotropic diffusion were nearly identical for the test cases we tried.  

We first evaluate convergence properties of our method, compared to the method of BBS, for the test case displayed in Fig. \ref{grsm}.  Two minimization schemes were used, based upon the Euler-Lagrange equations and the Sobolev gradient using just one application of the preconditioner (also refered to as the $H^3$ gradient), respectively.  For the purpose of demonstrating the effect of preconditioning on image inpainting, we found it sufficient and reasonable to apply the preconditioner $(I - \Delta)^{-1}$ just once.  Thus in the later images in order to save CPU time, we used just one application of preconditioning.  

We plot in Fig. \ref{quant} the reduction in $\| \nabla^{\bot} u \cdot \nabla \Delta u \|^2$ using the method of BBS and our energy minimization method.  We can observe that our method is quantitatively closer to the solution of the PDE.  With our minimization scheme we were able to achieve a decrease of two or three orders of magnitude for most cases.  As our method is a gradient descent method, we expect a linear rate of convergence.  However, we do note that by using the Sobolev gradient in place of the Euler-Lagrange equations, the convergence to a zero is slightly accelerated.

In the same figure \ref{quant} we plot the "error"  $\varepsilon= \max|u_{n+1} - u_n|$, where $u_n$ represents the current image. This quantity is used as stoping criterion for the iterative process since it seems to be the best measure for convergence in image processing, as it is generally a valid measure of convergence in numerical analysis. We should note in passing that, compared to Fluid dynamics applications where $\varepsilon < 10^{-8}$ is required for convergence, for image processing applications no apparent change in the image is observed starting with $\varepsilon < 10^{-4}$.
\begin{figure}[h!]
\centering
\includegraphics[width=.45\columnwidth]{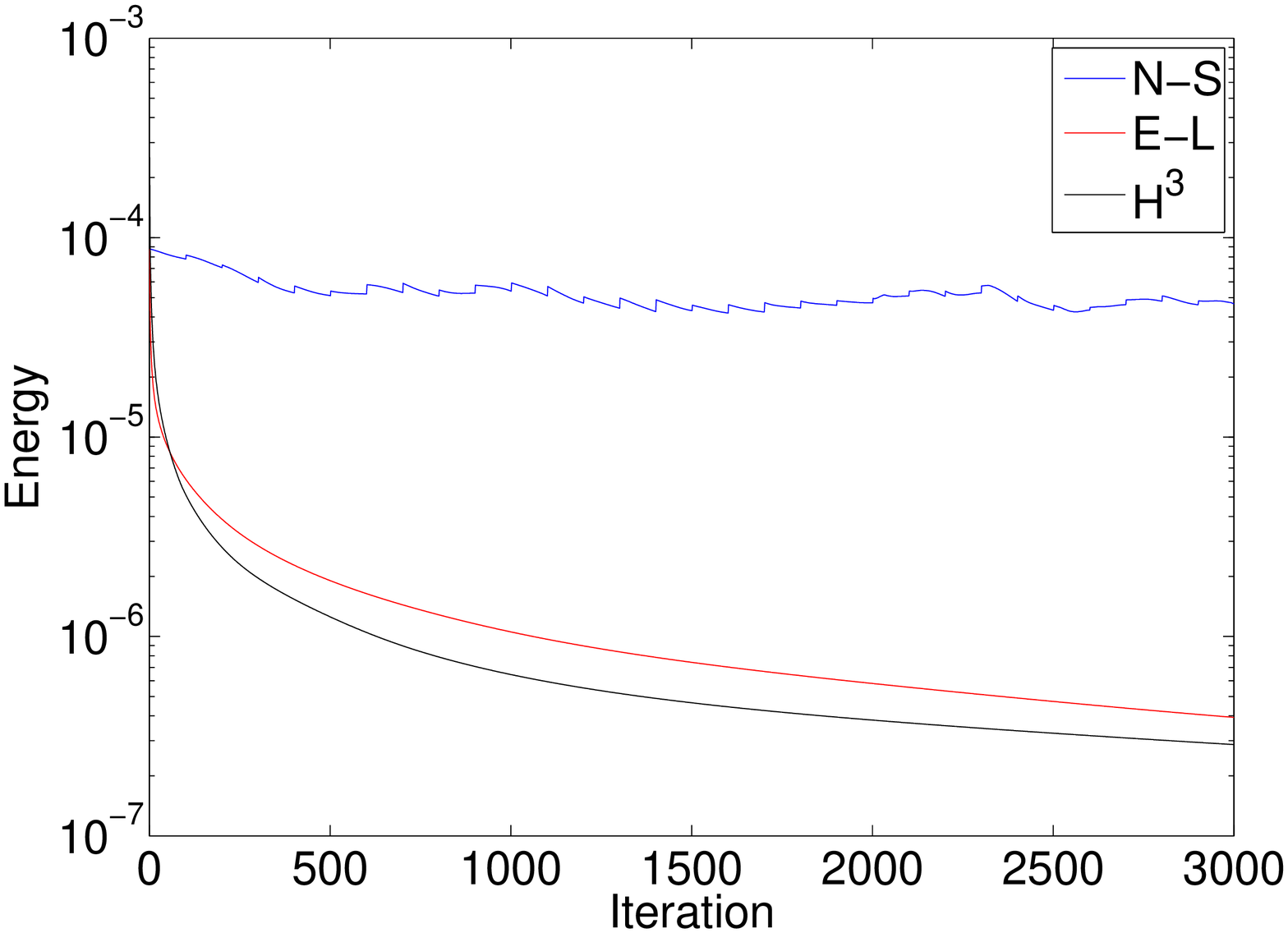}
\includegraphics[width=.45\columnwidth]{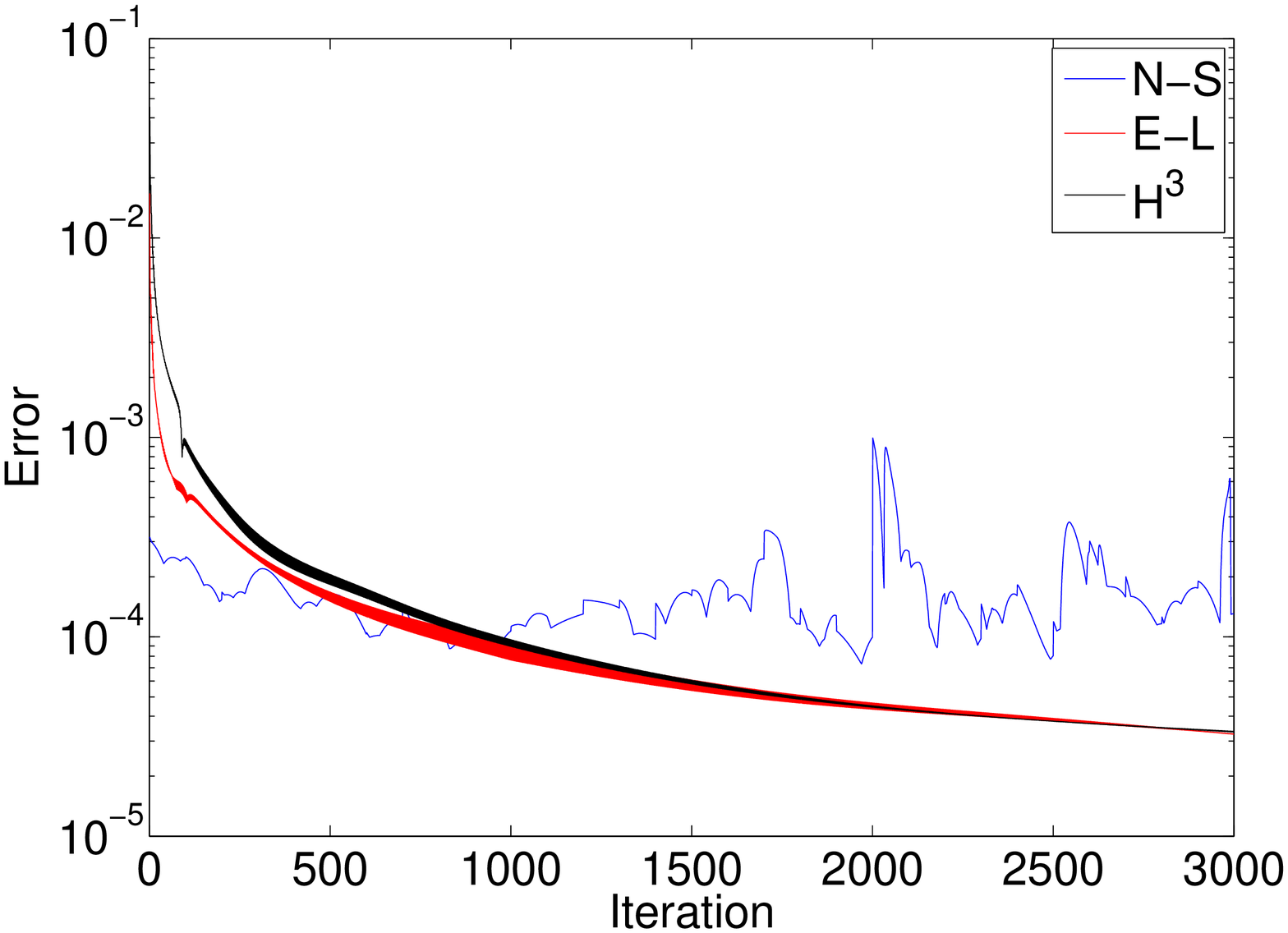}
\caption{Left: Comparison of the norm of the residual (energy)  for the different methods.  Right: The error as defined by $max \ |u_{n+1} - u_n |$ for the different methods.  N-S denotes the Navier-Stokes method following the implementation of Bertalm{\'\i}o \etal  E-L denotes the iterative method using the Euler-Lagrange equations.  $H^1$ denotes the Sobolev gradient.}\label{quant}
\end{figure}

An important question related to convergence is the condition number of the algorithm.  Although the condition number of the least squares problem when one considers minimization with the Lagrange equations is relatively high, we demonstrate, that this number is significantly reduced when we precondition the problem and take the gradient in a higher order Sobolev space.  Since these results concern more general numerical analysis considerations, we report them in the Appendix  \ref{conditioning}.

The images in Fig. \ref{grsm} were ran until an error tolerance of $10^{-4}$ was achieved.  Using the method of BBS, this took 9282 iterations in a CPU time of 67.28 seconds.  Using energy minimization with the Euler-Lagrange equations, this took 1014 steps in a CPU time of 29 seconds.  Using one application of the preconditioner, convergence was reached in 742 iterations in a CPU time of 32 seconds.  Using three application, convergence was reach in 942 step in about 40 seconds.  We remark that by preconditioning we were able to reach the error tolerance using a lesser number of iterations.  The CPU time was comparable for the case using the Euler-Lagrange equations and the Sobolev gradient gradient, as solving a linear system at each time step adds to the processing time.  However, one could take further measures to decrease this additional time by implementing a different algorithm for solving the linear system.

Qualitatively the effect of preconditioning is apparent in the bottom two images in figure \ref{grsm}.  While the image resulting from the Euler-Lagrange equations has some noise where the inpainting was performed, the image resulting from the Sobolev gradient is smooth in the inpainting region.  Also note that the edge between the face and the background is correctly placed.  In this sense, we remove the need to apply anisotropic diffusion.
\begin{figure}[h!]
\centering
\includegraphics[width=.45\columnwidth]{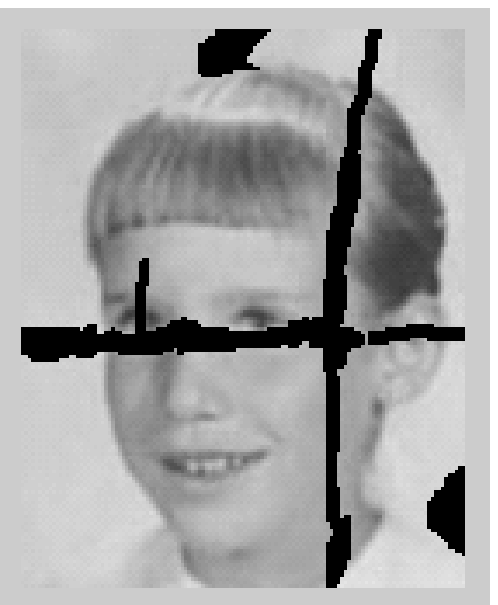}
\includegraphics[width=.45\columnwidth]{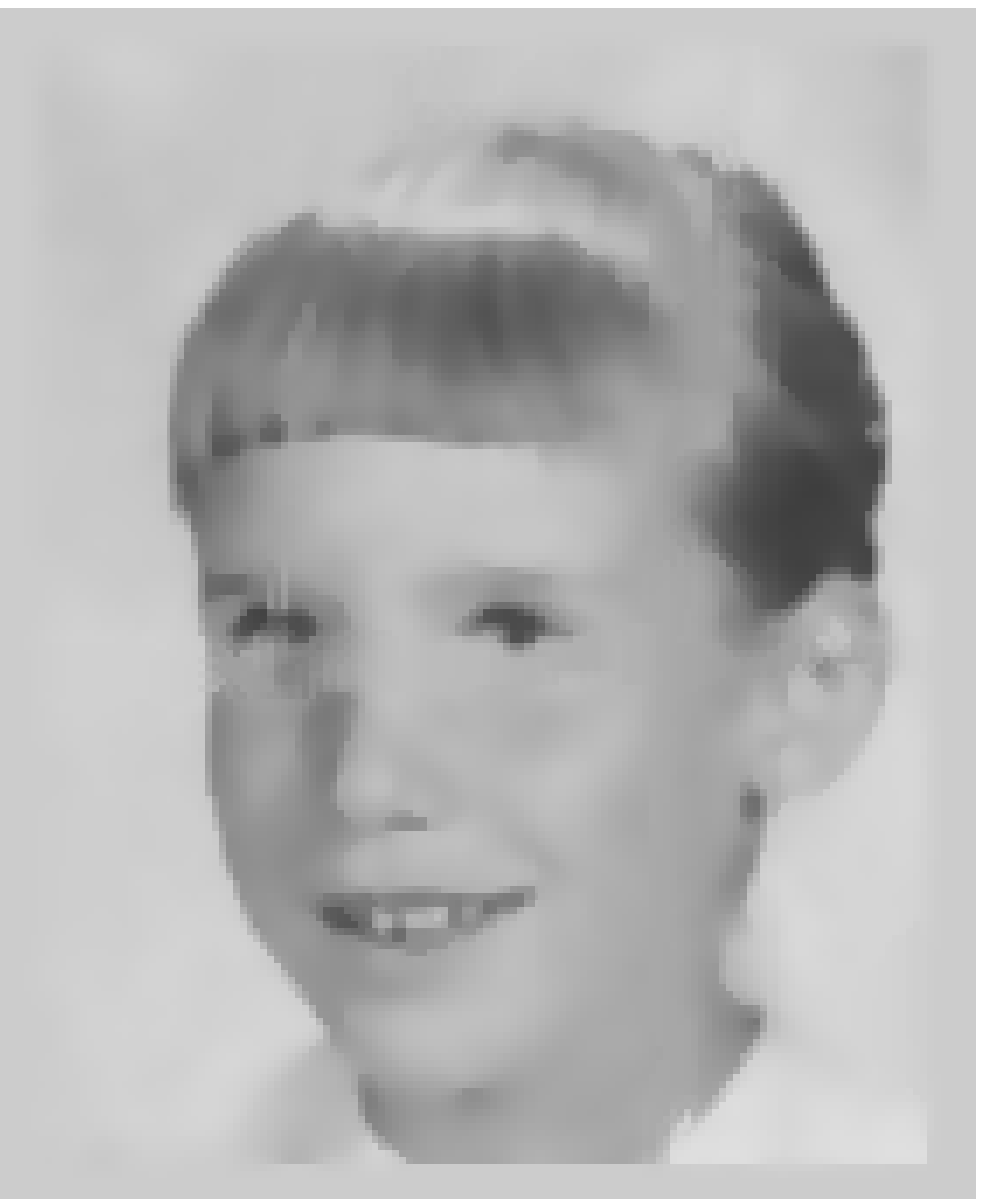}\\
\includegraphics[width=.45\columnwidth]{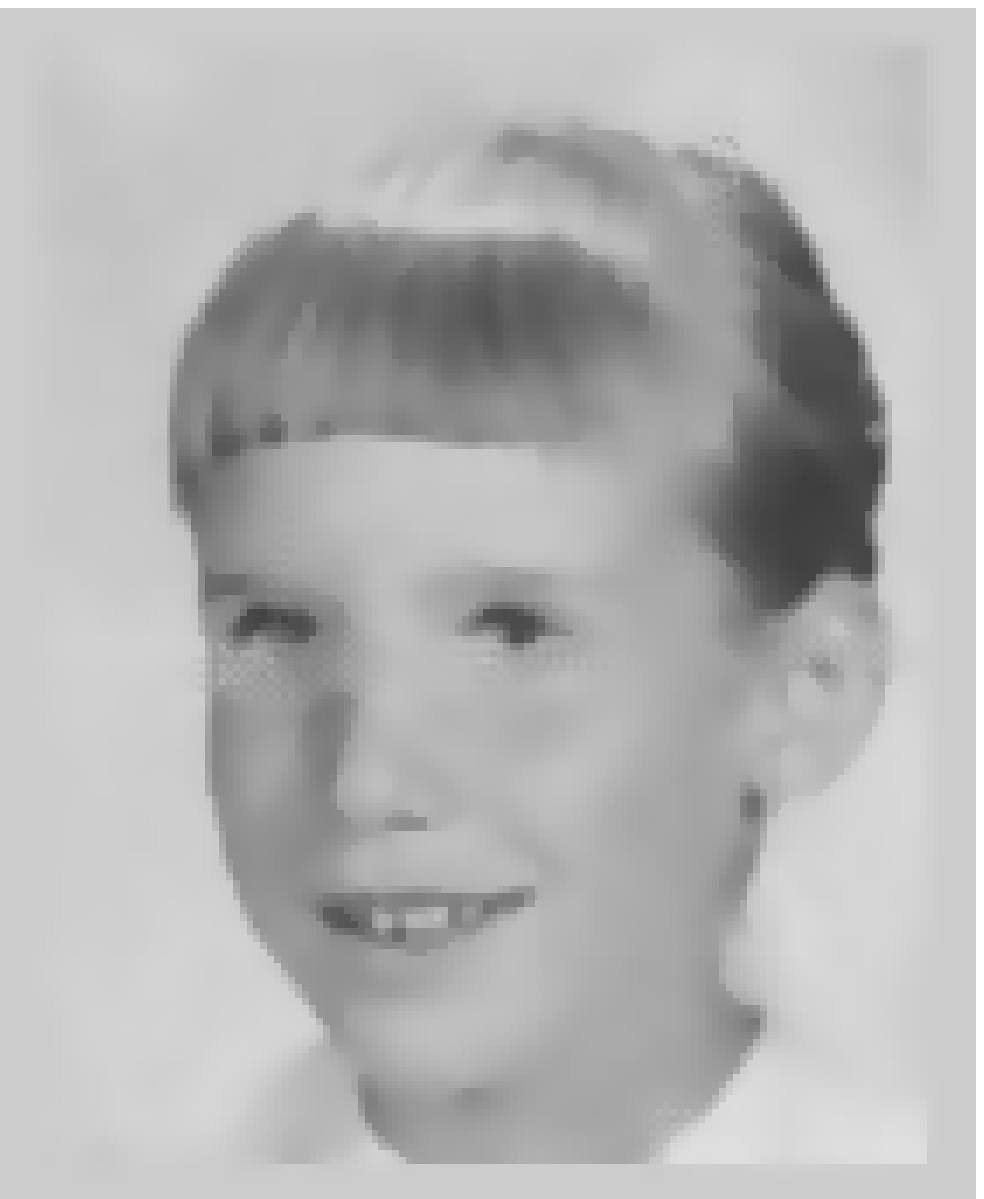}
\includegraphics[width=.45\columnwidth]{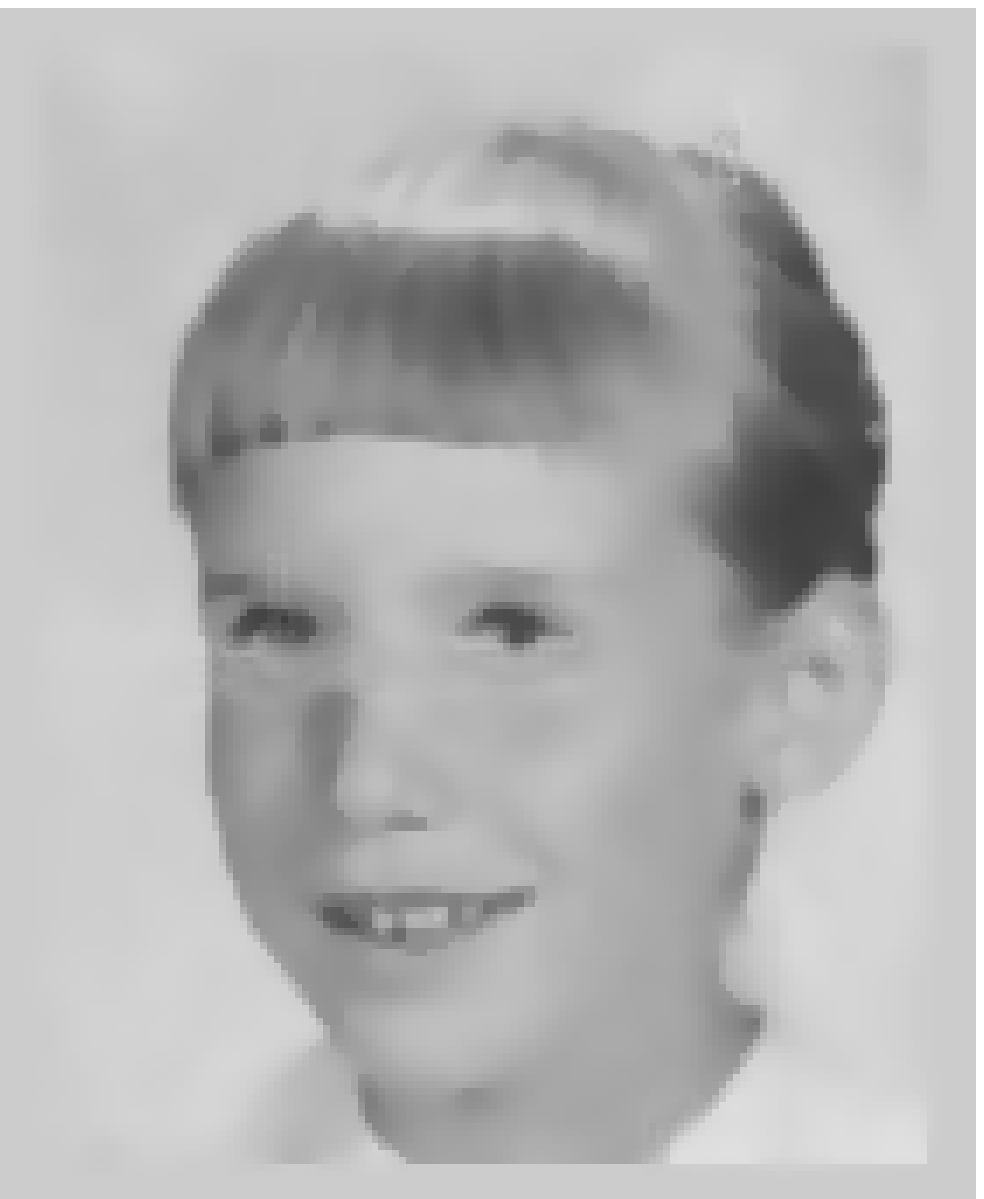}
\caption{The resulting images for the simulations.  Top left: the defective image, Top right: Navier-Stokes inpainting, Bottom left: Energy minimization with the Euler-Lagrange equations, Bottom right: Energy minimization with the $H^1$ gradient.}\label{grsm}
\end{figure}

In this image the result we present for the BBS method is qualitatively worst than the result presented in the original paper of Bertalm{\'\i}o \etal \cite{bertalmio2000}.  We attribute this to the fluctuating error as presented in figure \ref{quant}; the result for this problem initially improved, however the subsequent images  did not correctly inpaint the eye.  This behavior is certainly related to the careful choice of the parameters in the original implementation (time step, when and how to apply the anisotropic diffusion). Since the error analysis is not available for the method of BBS, we didn't attempt to find a deterministic choice for these parameters.

Since the full image of girls, first appeared in \cite{bertalmio2000}, is currently used to test inpainting methods, we present in  Fig. \ref{girls} our results for this test case. The inpainting region is correctly filled, with a better image quality obtained when the Sobolev gradient method is used.  
\begin{figure}[!h]
\includegraphics[width=.45\columnwidth]{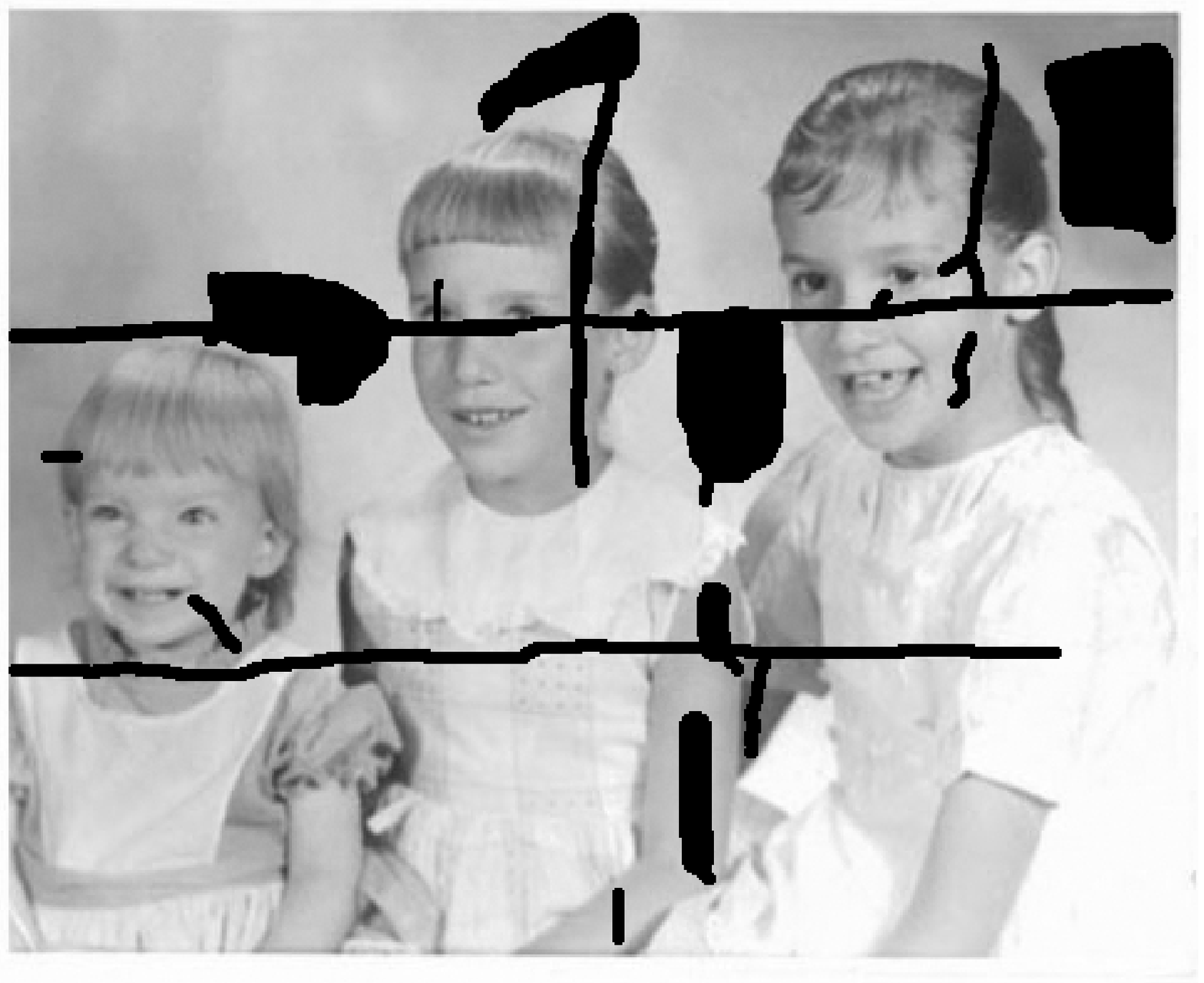}
\includegraphics[width=.45\columnwidth]{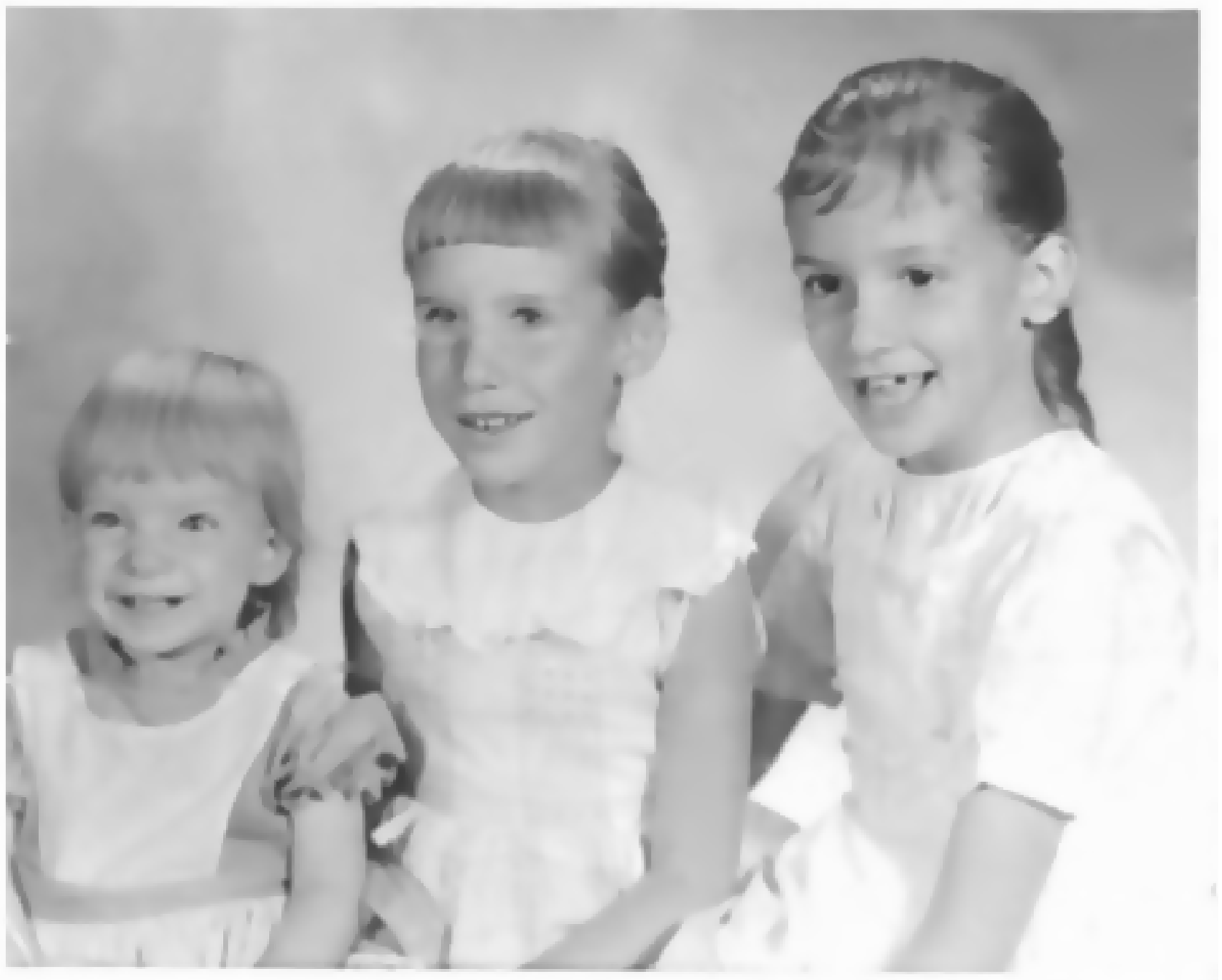}\\
\includegraphics[width=.45\columnwidth]{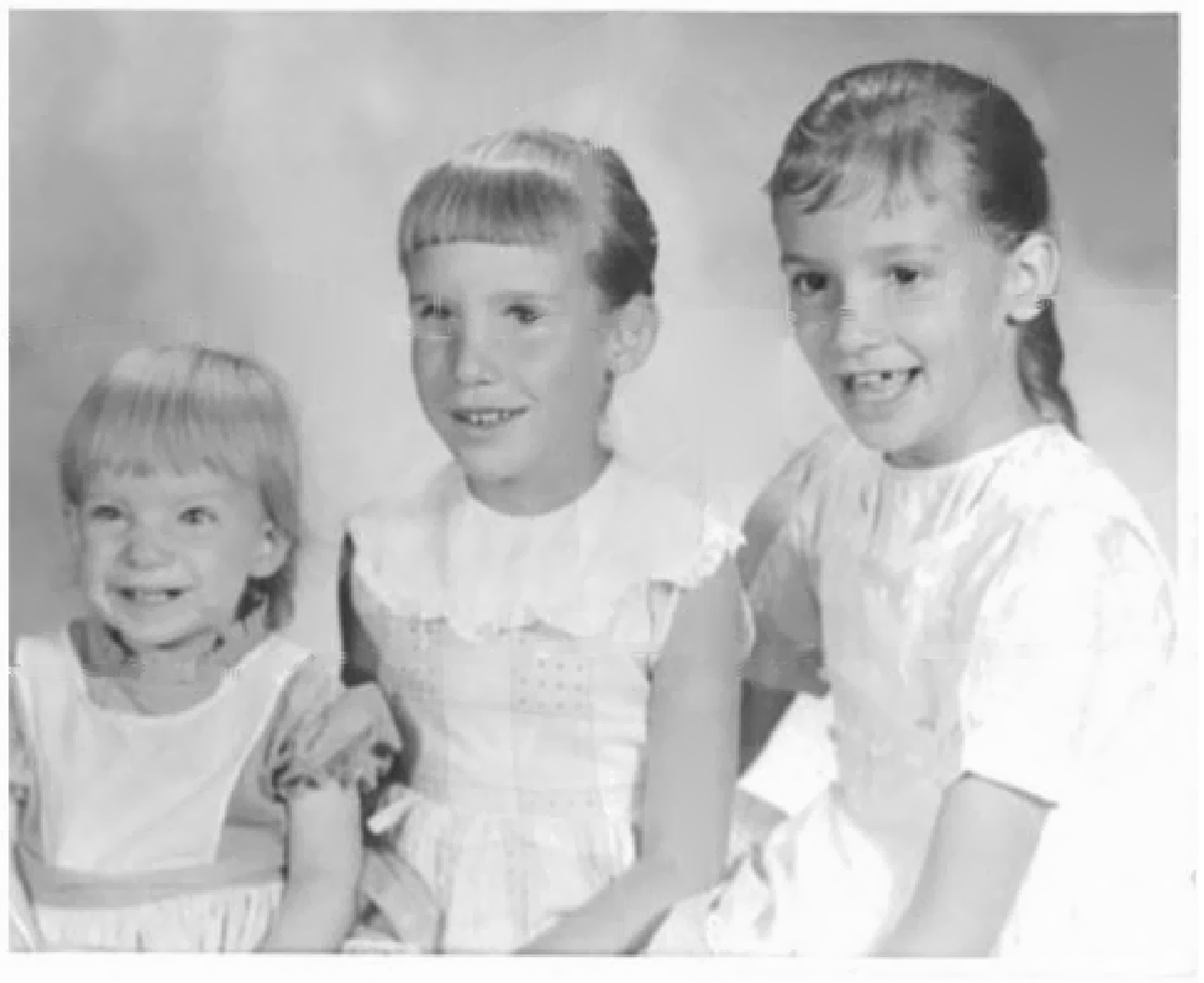}
\includegraphics[width=.45\columnwidth]{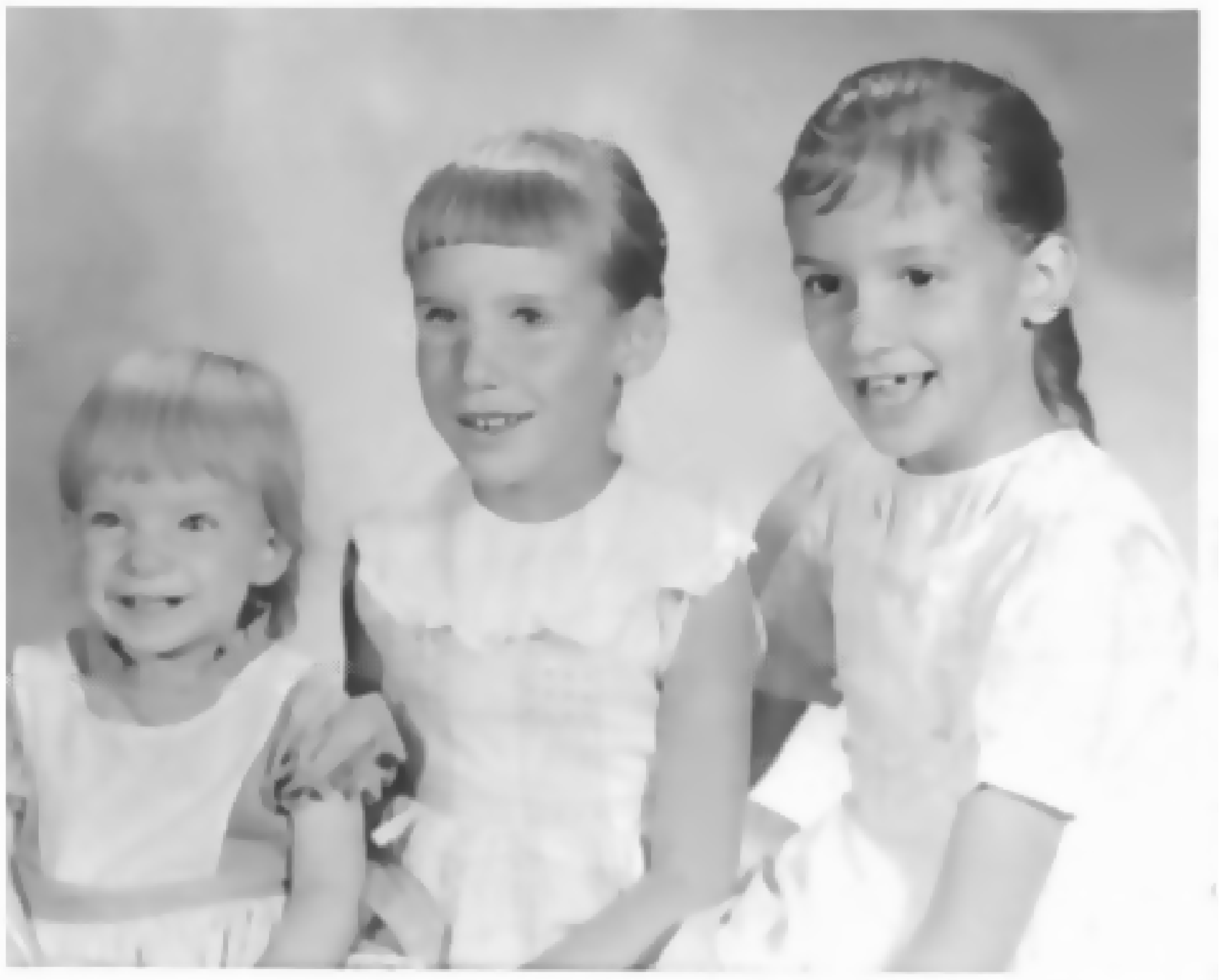}
\caption{Inpainting of the girls.  Compare with the original papers \cite{bertalmio2000} and \cite{bornemann}.  Top left: original image, top right: the solution of Laplace equation, bottom left: the solution using Navier-Stokes inpainting, bottom right: energy minimization with the $H^1$ gradient.}\label{girls}
\end{figure}

\subsection{Qualitative comparison with other methods}

In an effort to compare our results to other methods used in image inpainting, we use here the image \ref{eye} taken from \cite{bornemann}.
We performed our inpainting algorithm just on the gray scale image as the issues of edge recovery and smoothness of the interpolation can be determined from this image.

In comparison with the method of Bornemann and M{\"a}rz \cite{bornemann}, the construction of the iris is comparable.  In their result a better quality image is achieved as the texture of the eyebrow is interpolated.  Also the CPU for this example was about 60 seconds.  In \cite{bornemann} a reported CPU time of 0.4 seconds was reported. It goes without saying that our iterative method could not compete in terms of efficiency with a non-iterative fast-marching method, such that proposed in \cite{bornemann}. Nevertheless, the method of Bornemann and M{\"a}rtz needs a careful tuning of four parameters, which is not the case for our method (see also \cite{survey}).

\begin{figure}[!h]
\includegraphics[width=.45\columnwidth]{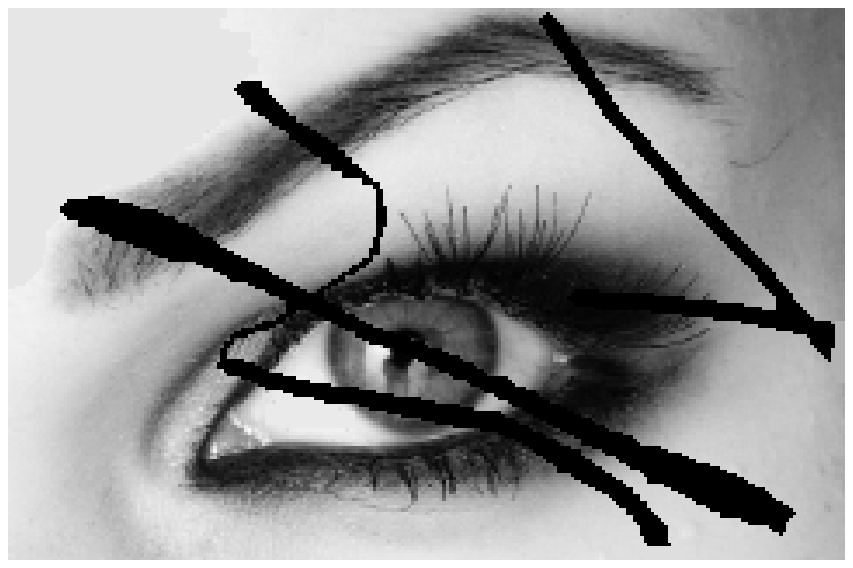}
\includegraphics[width=.45\columnwidth]{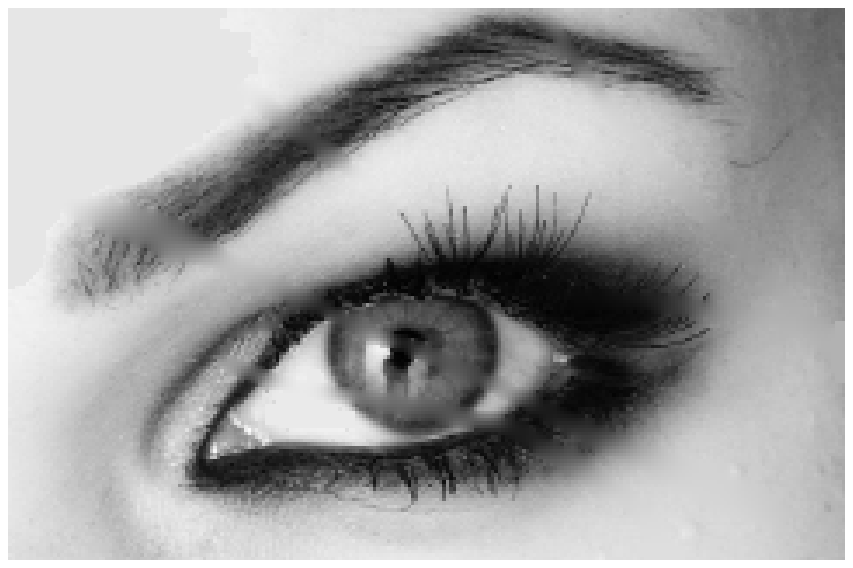}\\
\includegraphics[width=.45\columnwidth]{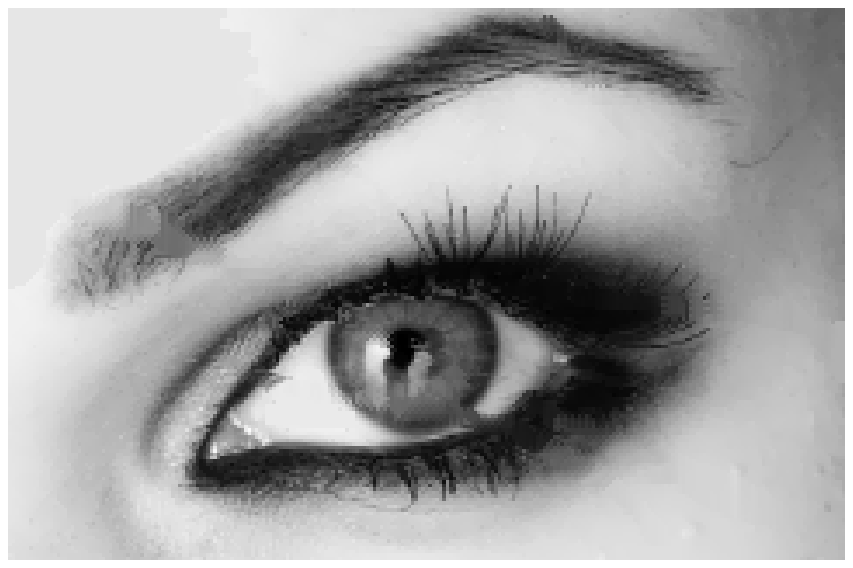}
\includegraphics[width=.45\columnwidth]{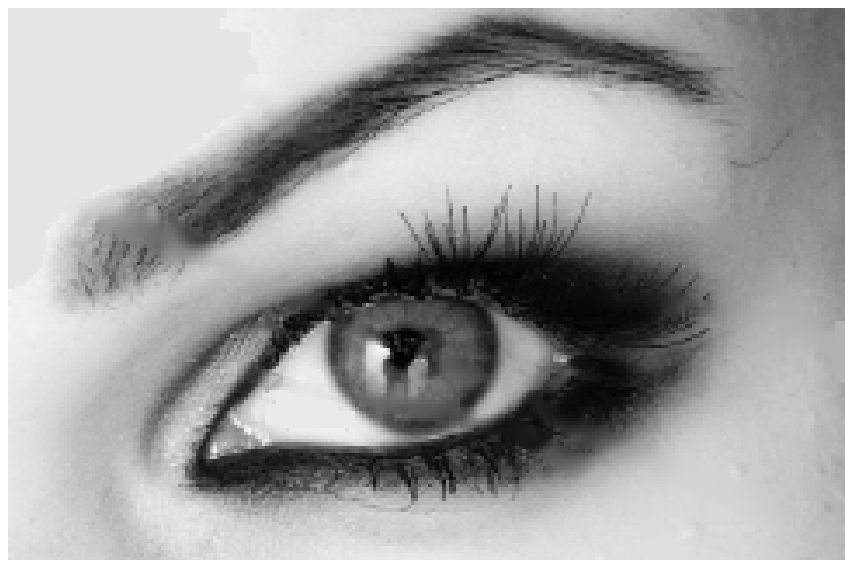}
\caption{Inpainting of the eye as was done in \cite{bornemann}.  Top left: original image, top right: the solution to Laplace's equation, bottom left: the solution using Navier-Stokes inpainting, bottom right: energy minimization with the $H^1$ gradient.}\label{eye}
\end{figure}
\subsection{Image interpolation}

In this section, we apply our algorithm to perform image interpolation.  We start with the two images given in figure \ref{interp} and interpolate the bird to 160 by 152 pixels and Lena to 200 by 256 pixels using nearest neighbor interpolation in figure \ref{interp}. The interpolation is carried out as follows.  The image is first expanded to the dimensions of the larger image where the newly added pixel designate the inpainting region.  As with inpainting, we first solve Laplace's equation then apply the inpainting algorithm to further sharpen the image.  
\begin{figure}[h!]
\centering
\includegraphics[width=.45\columnwidth]{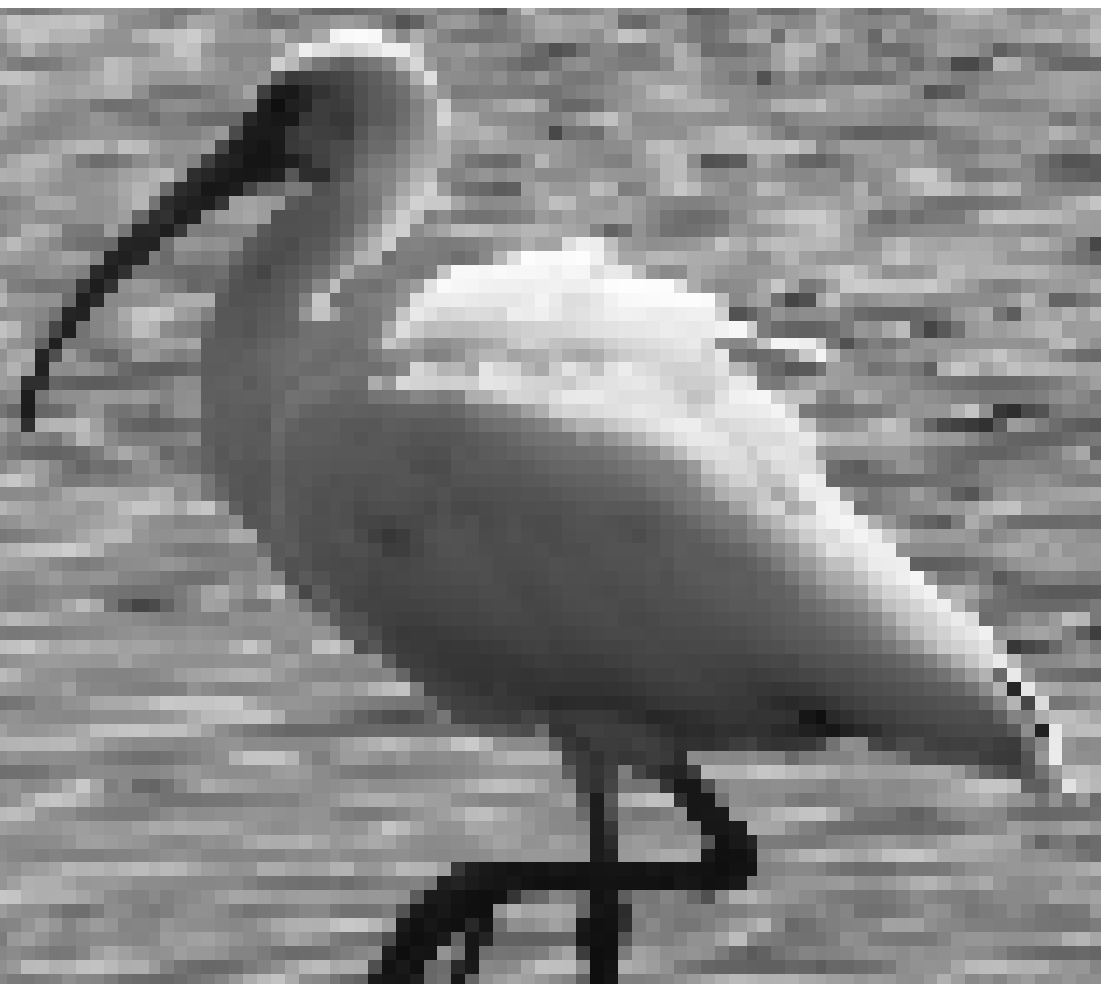}
\includegraphics[width=.45\columnwidth]{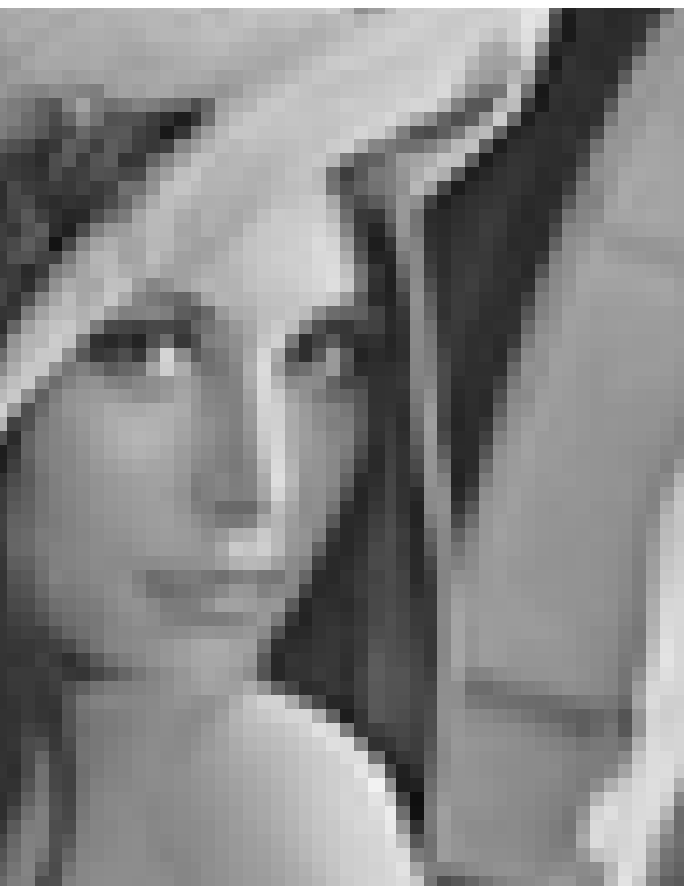}\\
\caption{ Image interpolation: original image enlarged using nearest neighbor interpolation.  Left: bird original size 80 by 76 pixels, Right: Lena original image 50 by 64 pixels.} \label{interp}
\end{figure}

For these two images the effect of using anisotropic diffusion in the method of BBS is evident.  The smoothing effect of anisotropic diffusion does not allow one to recover the detail of these two images.  In the energy minimization method, since we avoid using anisotropic diffusion, we can recover sharp edges as well as some of the finer detail.  For example in the image of the bird, notice that in the energy minimization method using both the Euler-Lagrange equation and the Sobolev gradient, the eye of the bird was actually recovered.  The effect of preconditioning here is evident in that it minimizes some of the artifacts associated with image interpolation.  For example comparing the beak of the bird in the image resulting from cubic interpolation and minimization with the Lagrange equation vs minimization with the Sobolev gradient, we see that the latter results in sharper edges. Similar effects are evident when we examine the interpolation of Lena.  In particular, the smoothing and sharper edges are most visible in the upper part of the hat of Lena.

\begin{figure}[h!]
\centering
\includegraphics[width=.45\columnwidth]{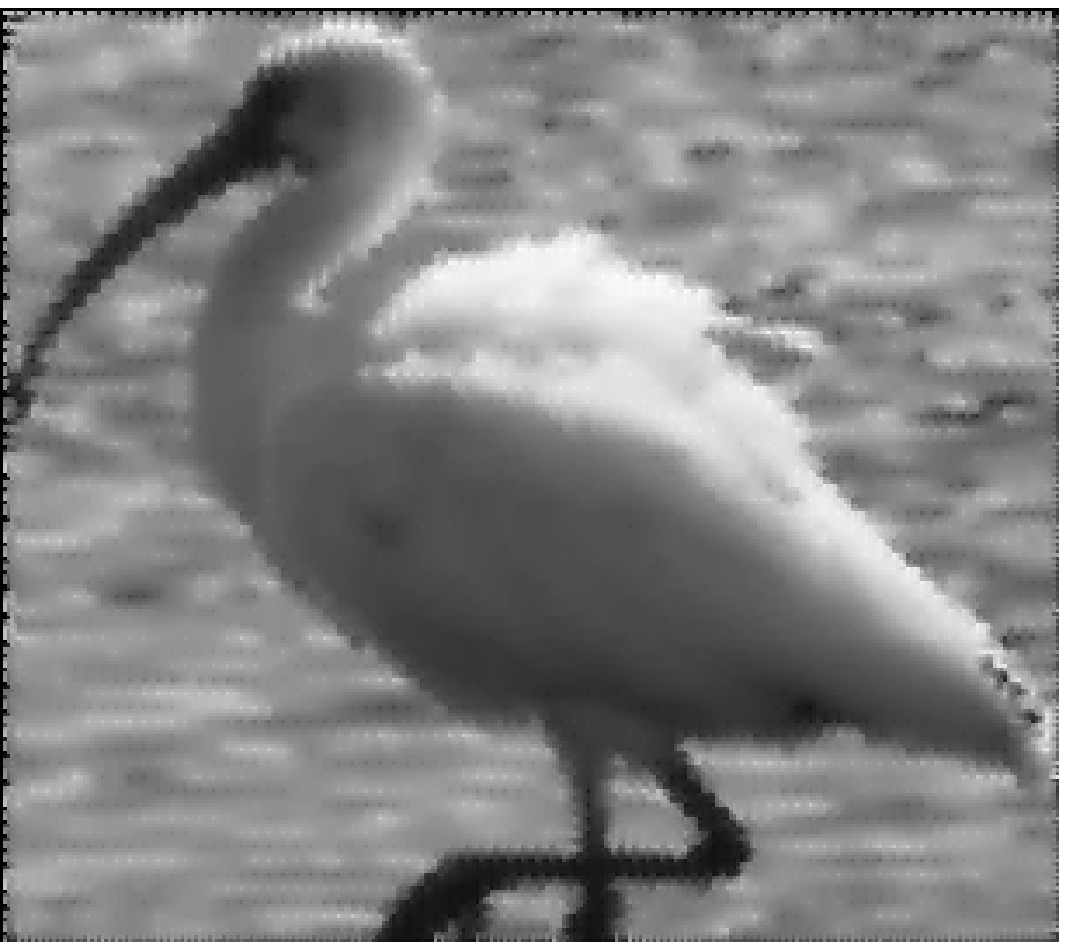}
\includegraphics[width=.45\columnwidth]{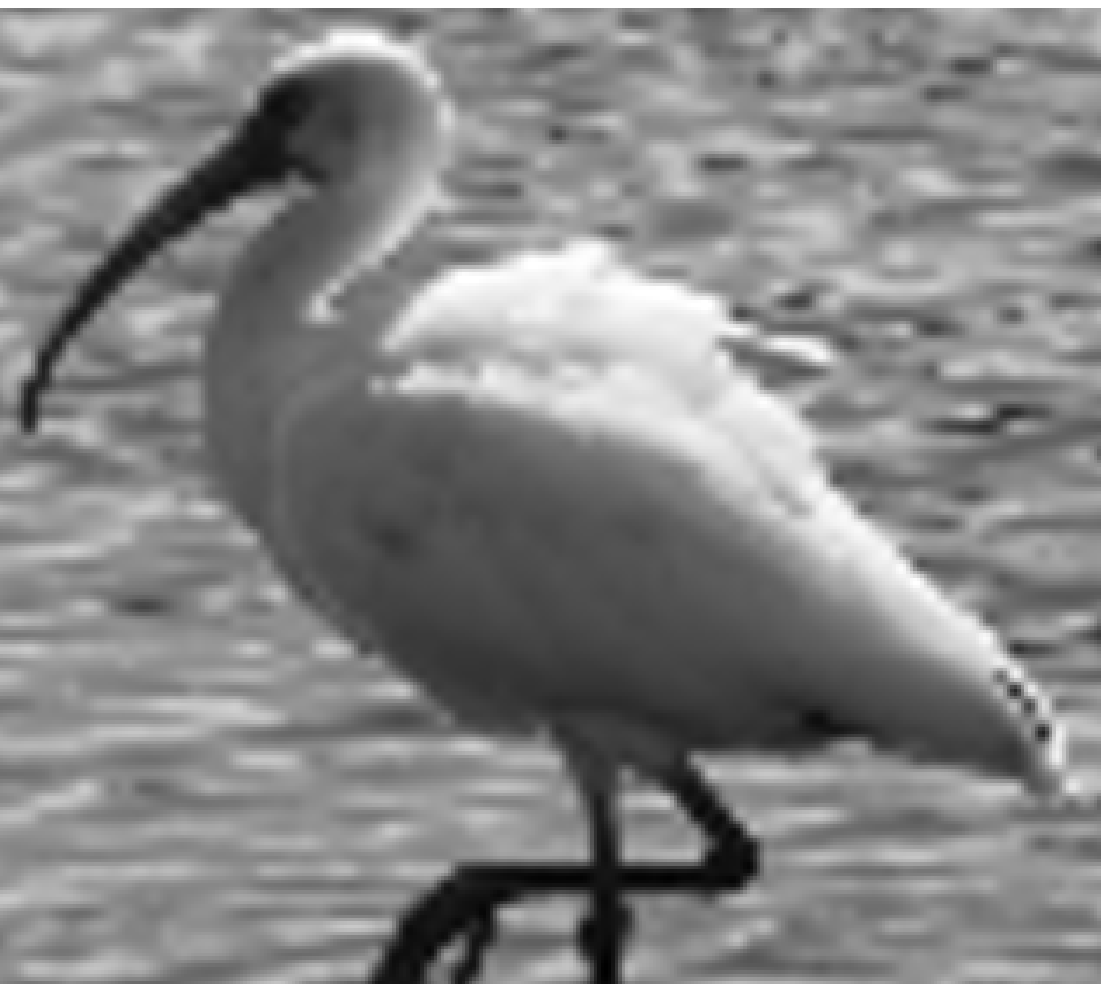}\\
\includegraphics[width=.45\columnwidth]{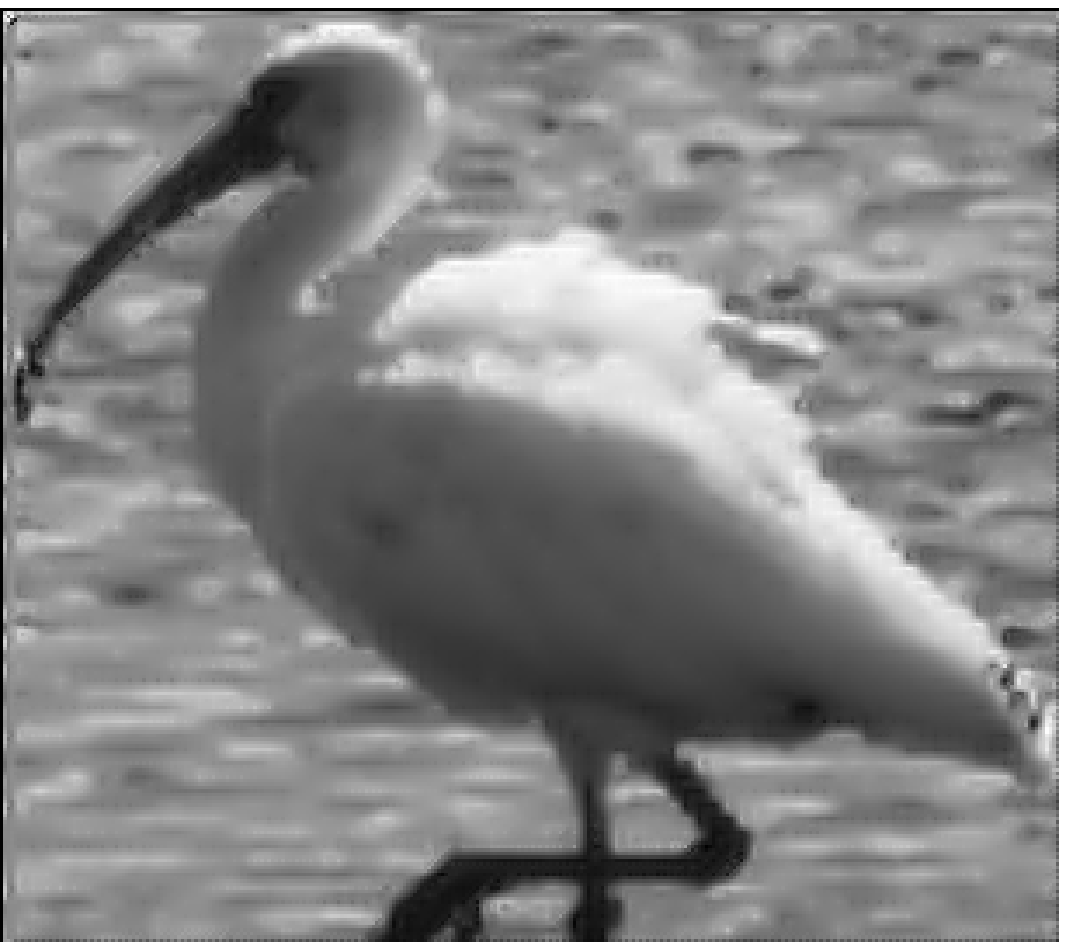}
\includegraphics[width=.45\columnwidth]{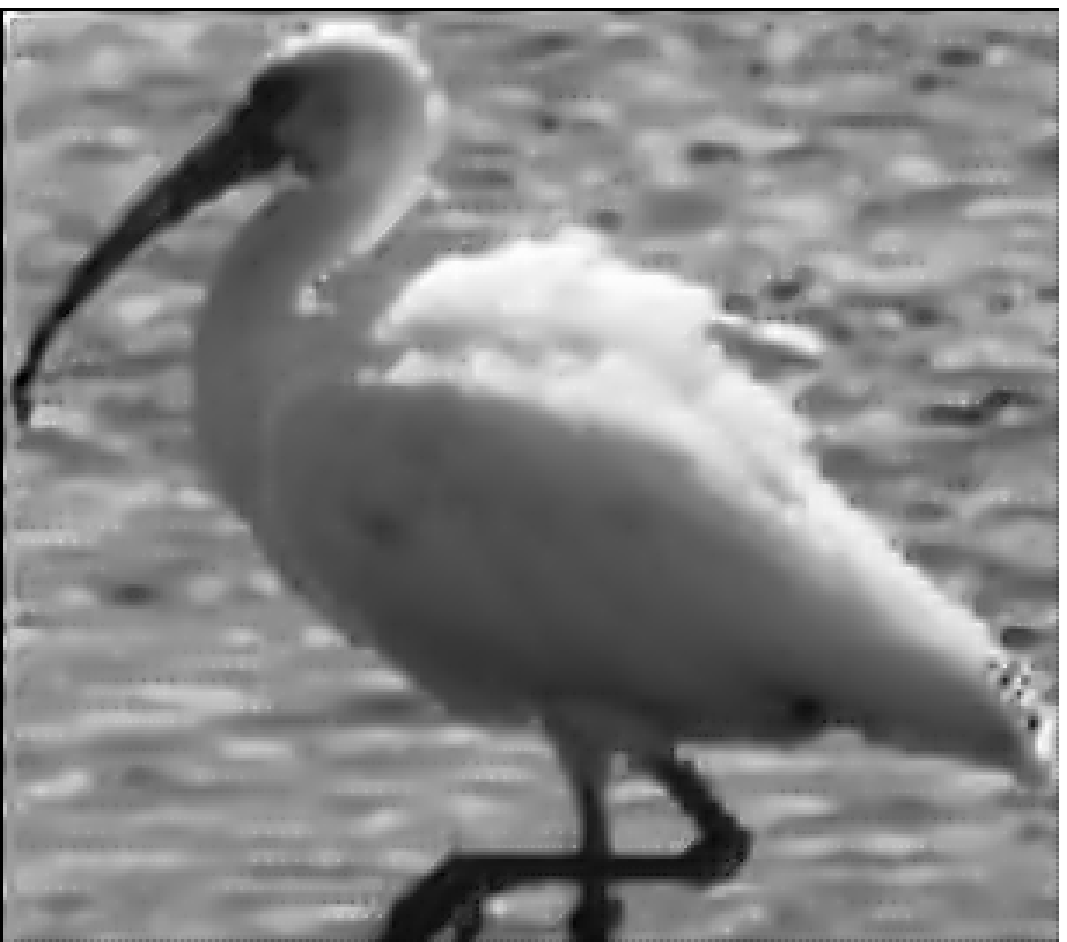}
\caption{Resulting images for image interpolation.  Interpolation to twice as many pixels.  Top left result from the method of Bertalm{\'\i}o \etal, top right result from bicubic interpolation, bottom right result from minimization using the Euler equations, bottom right result from minimization using the $H^1$ gradient.  Notice artifacts are not present when using the $H^1$ gradient (see beak of bird for example).}\label{bird}
\end{figure}

\begin{figure}[h!]
\centering
\includegraphics[width=.45\columnwidth]{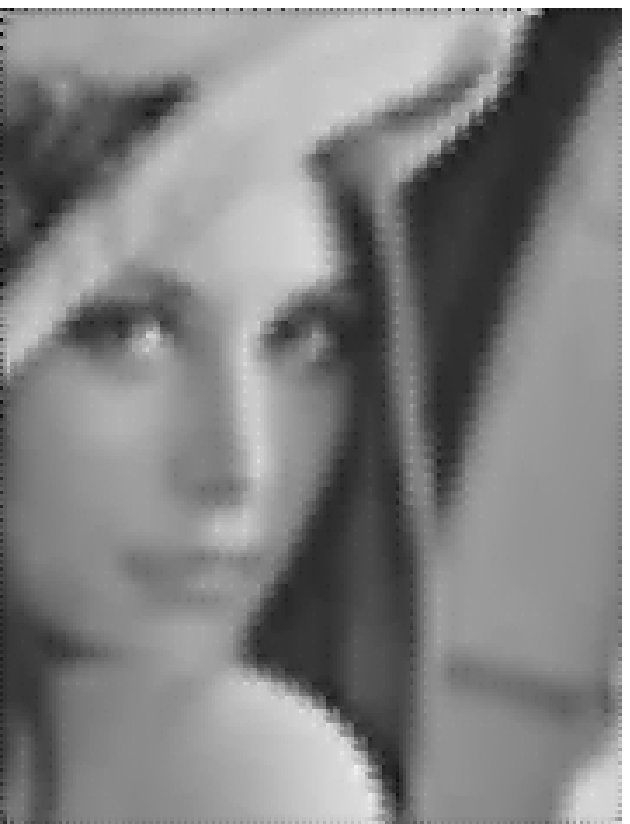}
\includegraphics[width=.45\columnwidth]{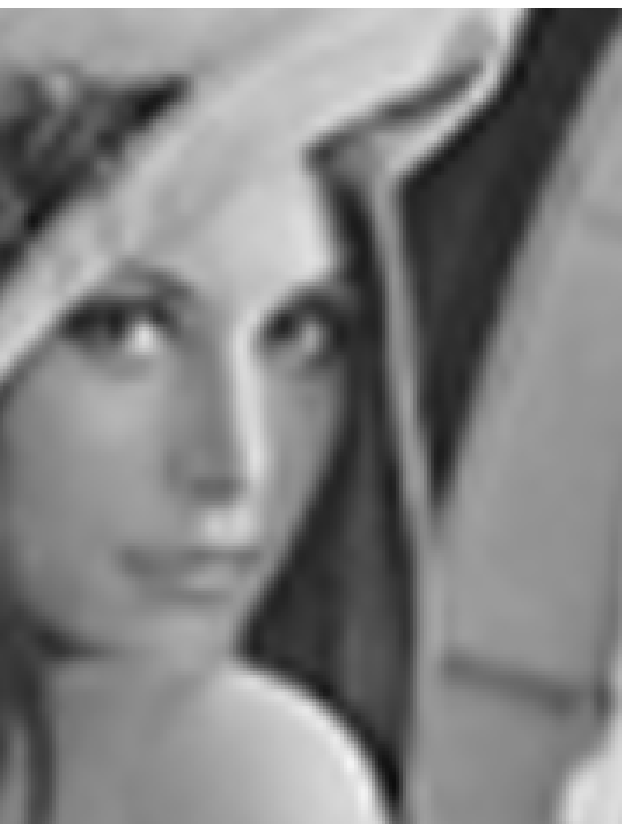}\\
\includegraphics[width=.45\columnwidth]{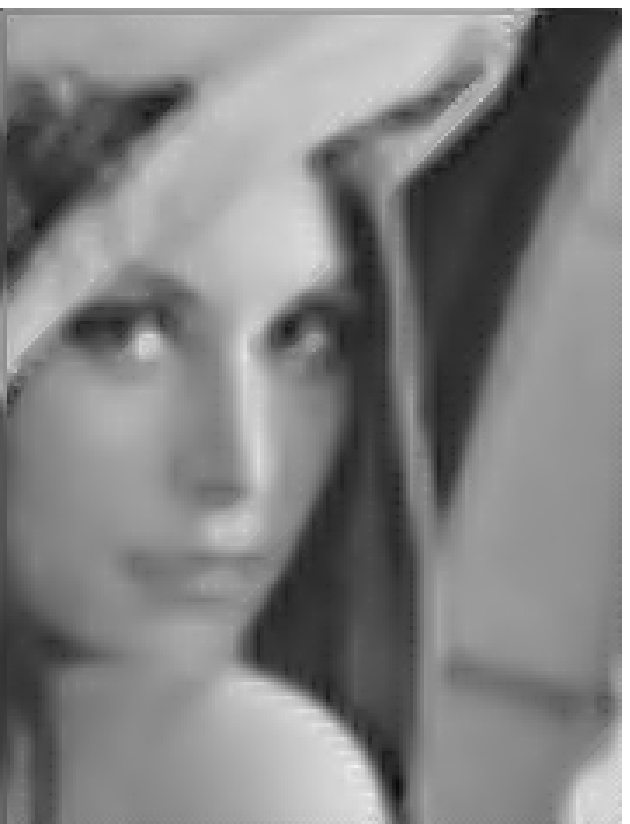}
\includegraphics[width=.45\columnwidth]{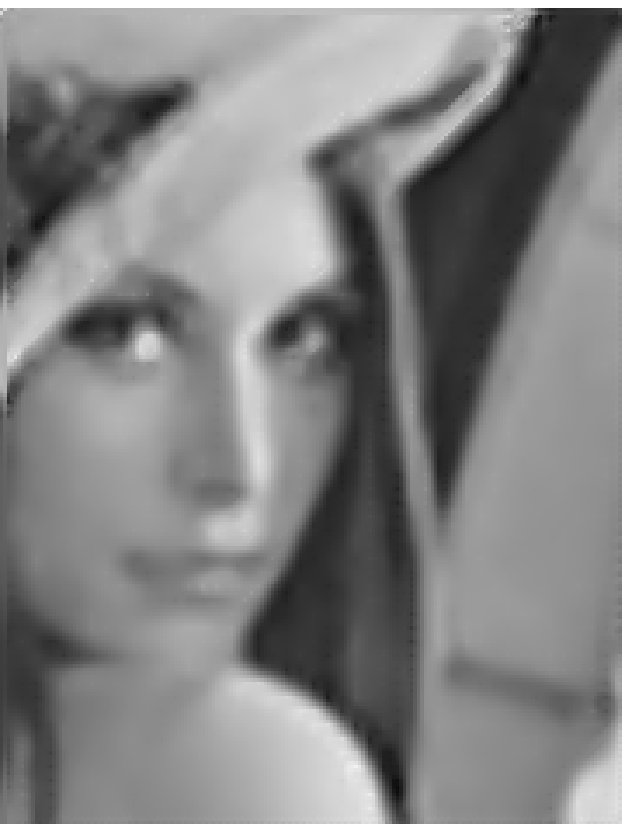}
\caption{Resulting images for image interpolation.  Interpolation to four times as many pixels.  Top left result from the method of Bertalm{\'\i}o \etal\!, top right result from bicubic interpolation, bottom right result from minimization using the Euler equations, bottom right result from minimization using the $H^1$ gradient.}\label{lena}
\end{figure}

We remark here that in practice the assumption that $u$ be $C^1$ continuous on the entire domain $\bar{\Omega}$ is too strong. In image inpainting an edge represents a discontinuity, thus in theory this important case would be excluded from our analysis.  The idea of using a gradient in a higher order Sobolev space is to increase the smoothness of the interpolation, and this is supported by the theory as when one computes a gradient in a higher order Sobolev space, we see that the minimizer should also satisfy further regularity conditions.  In several of our cases, using a Sobolev gradient reduced artifacts associated with image inpainting and image interpolation while preserving edges (see images \ref{bird}, \ref{lena}, and \ref{grsm} and the associated discussions.  However, some care should be taken in practice as over smoothing would result in blurred reconstruction of edges.  This artifact appeared in some or our results (see for example \ref{eye}).  In principle, the analysis of this problem could be studied and carried out in a lower order Sobolev space or in the space of functions of bounded variation which is the traditional space used for an analytical study of problems in image processing.  We expect that the estimates required for such an analysis would be of a similar nature to the estimates required for existence and uniqueness results of the Navier-Stokes equations and hence beyond the scope of this paper.

\section{Summary and final discussion}

In this work we derived a method to solve the digital image inpainting model that is based on the model of the Navier-Stokes equations for an incompressible flow.  We used a variational formulation and a gradient flow to converge to the stationary solution. The flow is based on a gradient coming from a Sobolev norm, instead of the generally used $L^2$ norm, which in practice is associated with preconditioning the Euler-Lagrange equations.  We found that through the use of the preconditioner, we can favorably alter properties of the interpolation by reducing noise and offering a more accurate interpolation of the edges.

Compared to the original Navier-Stokes model proposed by Bertalm{\'\i}o, Bertozi and Sapiro (BBS) \cite{bertalmio2001a}, our algorithm 
displays better convergence properties, with similar qualitative image inpainting results. From the practical point of view of computer implementation, we have eliminated technical aspects related to the use of numerical methods for Navier-Stokes equations (slope limiters, dynamic relaxation for Poisson equation, anisotropic diffusion steps, etc). We also mention that the new algorithm does not require to tune constants, as in non-iterative methods \cite{bornemann}. 
Using a straightforward finite difference implementation, we demonstrated, through various examples for image inpainting and image interpolation, that the algorithm is effective for such problems.

We finally should like to emphasize that the mathematical theoretical framework that we developped in this paper  is of more general interest and could be applied to other variational formulations used in image inpainting, such as the one presented in \cite{masnou1998}. Such new applications of the Sobolev gradient appear as a natural extension of the present work.

\pagebreak
\appendix

\section{Condition number}\label{conditioning}

We give here two precise definitions for the condition number and show that by Sobolev preconditioning, the relative condition number is, as shown in figure \ref{figcond}, improved.

Following the definition given in \cite{trefethen}, the following definitions for the condition number of a function $f$ from an arbitrary normed space $X$ to an arbitrary normed space $Y$ is used.
\begin{definition}
The relative condition number of a differentiable function $f: X \rightarrow Y$ is defined as
\begin{equation*}
\frac{\|f'(x)\|_{L(X,Y)} \|x\|_X}{\|f(x)\|_Y}.
\end{equation*}
\end{definition}
\begin{definition}
The absolute condition number of a function $f : X \rightarrow Y$ is given by
\begin{equation*}
\kappa= \lim_{\delta \rightarrow 0} \text{ sup }_{\|h\|_X \leq \delta}  \frac{\| f(x + h) - f(x) \|}{\|h\|_X}.
\end{equation*}
\end{definition}
$\| f'(x) \|_{L(X,Y)}$ is the norm of $f'(x)$ when viewed as a bounded linear operator from $X$ to $Y$ for a differentiable function $f$.  In case $X=\mathbb{R}^n$ is equipped with the Euclidean metric and $Y=\mathbb{R}$, then $f'(x)$ is just the Jacobian of $f$ at $x$.

Note that if $Y=\mathbb{R}$, then $f'(x)h= \langle h , \nabla_X f(x) \rangle_X$ for each $x \in X$, following the definition of the Sobolev gradient.  Thus $\|f'(x) \|_{L(X , \mathbb{R})} = \ \sup \{ f'(x)h : h \in X, \ \|h\|_X=1 \} = \| \nabla_X f(x)\|_X.$

In the finite dimensional setting, $H^k$ is the space of all $n$-dimensional vectors equipped with the metric $\langle u , v \rangle_{H^k} = \langle u , (I - \Delta)^{k} v \rangle_{\mathbb{R}^n}$.  Here $\Delta$ is a discretization of the Laplacian.  Let $f : H^k \rightarrow \mathbb{R}$, then using the definition of the gradient
\begin{equation*}
f'(x)h = \langle h , \nabla f(x) \rangle_{\mathbb{R}^n} = \langle h , (I - \Delta)^{k} \nabla_{H^k} f(x) \rangle_{\mathbb{R}^n} \ \forall h \in \mathbb{R}^n.
\end{equation*}
Thus
\begin{equation}
\nabla_{H^k} f(x) = (I - \Delta)^{-k} \nabla f(x)
\label{eq-gradHk}
\end{equation}
and
\begin{equation*}
\| \nabla_{H^k} f(x) \|_{H^k} = \langle (I- \Delta)^k (I- \Delta)^{-k} \nabla f(x) , (I - \Delta)^{-k} \nabla f(x) \rangle = \langle \nabla f (x), (I - \Delta)^{-k} \nabla f(x) \rangle.
\end{equation*}
We make two remarks.  First, we see that by equipping the domain of the problem with the $H^k$ norm in place of the Euclidean norm, we precondition the problem so that the absolute condition number of the problem as defined above is reduced by a factor of $| (I - \Delta)^{-1} |^k$.

This effect could be easily illustrated if the Fourier representation of the gradient is used. If the ordinary gradient is sampled on a rectangular, regular grid as:
\begin{equation}
 \nabla f(x_j) = \sum_p \widehat{g} e^{i \lambda_p x_j},
\end{equation}
then, following (\ref{eq-gradHk}), the $H^k$ gradient will have a similar representation (same basis functions) with modified Fourier coefficients:
\begin{equation*}
 \nabla_{H^k} f(x_j) = \sum_p \widehat{G} e^{i \lambda_p x_j},\quad \widehat{G} = \frac{\widehat{g}}{(1+\lambda_p^2)^k}.
\end{equation*}
Since  $k \geq 1$ we infer that
$\| \nabla_{H^k} f(x) \|^2 = \sum \|\widehat{G}\|^2$ is reduced and therefore the condition number is improved. The practical consequence is that diminishing the corresponding wave numbers in the Fourier decomposition results is attenuating the most oscillating modes of the solution. This could explain the smoothing effect of Sobolev gradient methods (see also \cite{garciaripoll}).
Second, using the above reasoning, if $f$ is real valued and $X=H^k$, then $\| f'(x) \|_{L(X,Y)} = \| \nabla_X f(x) \|_X$ and hence the relative condition number is given by
\begin{equation*}
\frac{\langle \nabla f(x), (I - \Delta)^{-k} \nabla f(x) \rangle \|x\|_{H^k}}{|f(x)|}.
\end{equation*}

In the examples we present, we normalize all images so that the maximum pixel value is one and thus use the relative condition number as a measure of how well conditioned the problem is.

In figure \ref{figcond}, we give a plot of the relative condition number for the case corresponding to image \ref{grsm}.  We compare the relative condition number of the problem with and without preconditioning.  The relative condition number is lowest for $H^3$, achieving a reduction of a factor of 4 compared with the Lagrange equations.

\begin{figure}[h!]
\centering
\includegraphics[width=1\columnwidth]{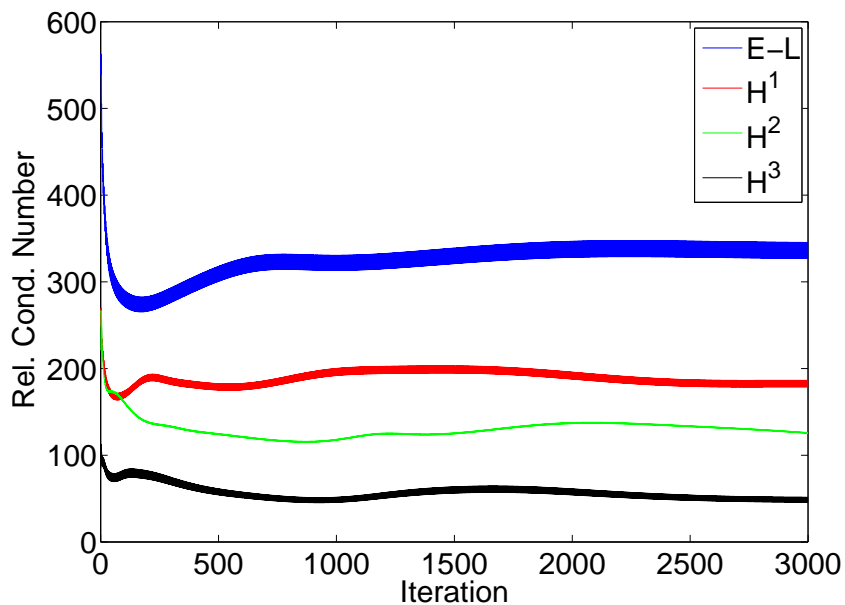}
\caption{Plot of the relative condition number versus iteration corresponding to the inpainting of figure \ref{grsm}.  E-L' corresponds to the minimization of the energy with the Euler-Lagrange equation  (no preconditioning), and $H^1$, $H^2$, and $H^3$ correspond to the minimization using the $H^1$, $H^2$, or $H^3$ gradients.}\label{figcond}
\end{figure}

Another possibility to deal with the condition number issues is to reformulate this problem as a first order system.  This method is currently under investigation in \cite{renkans} and could be possibly applied  to the inpainting problem.

\section*{Acknowledgments}
We thank Dr. Santiago Betelu for suggesting this problem.  We would also like to acknowledge J. W. Neuberger, S. Masnou, and F. Hecht for stimulating discussions and useful suggestions.

\end{document}